\newcommand{\Pic}{\rm Pic\,}
\newcommand{\proset}{\,\mathrel{\lower 4pt\hbox{$\scriptscriptstyle/$}
\mkern -14mu\subseteq }\,} 
 \newtheorem{theorem}{Theorem}[section]
  \newtheorem{corollary}[theorem]{Corollary}
 \newtheorem{lemma}[theorem]{Lemma}
 \newtheorem{proposition}[theorem]{Proposition}
\newtheorem{remark}[theorem]{Remark}
 \newtheorem{example}[theorem]{Example}
\numberwithin{equation}{section}
\def\ker{\operatorname{ker}}
\def\coker{\operatorname{coker}}
\begin{document}
\date{\today}

\title{Equivariant Picard groups and Laurent polynomials}
 \author{Vivek Sadhu} 
 
 \address{Department of Mathematics, Indian Institute of Science Education and Research Bhopal, Bhopal Bypass Road, Bhauri, Bhopal-462066, Madhya Pradesh, India}
 \email{ vsadhu@iiserb.ac.in, viveksadhu@gmail.com}
 \keywords{Equivariant Picard groups, Contracted functor, G-sheaves}
 \subjclass[2010]{Primary 14C35; Secondary 19E08, 18E10.}
 \thanks{Author was supported by SERB-DST MATRICS grant MTR/2018/000283}
 
 \begin{abstract}
 Let $G$ be a finite group. For a $G$-ring $A,$ let ${\rm Pic}^{\it G}({\it A})$ denote the equivariant Picard group of $A.$ We show that if $A$ is a finite type algebra over a field $k$ then ${\rm Pic}^{\it G}({\it A})$ is contracted in the sense of Bass with contraction $H_{et}^{1}(G; Spec(A), \mathbb{Z}).$ This gives a natural decomposition of the group ${\rm Pic}^{\it G}({\it A[t, t^{-1}]}).$
 \end{abstract}
 \maketitle
\section{Introduction}
Throughout the paper, $G$ is a finite group and a $G$-scheme will always mean a scheme which is separated, finite type over a field $k$ equipped with an action of $G.$

H. Bass introduced the notion of contracted functor to study $K$-theory of Laurent polynomial rings (see chapter XII of \cite{Bass}). In fact, Bass negative $K$-groups are defined using this notion. In \cite{Bass}, Bass also observed that the study of the notion of contracted functor for Picard groups may be useful to understand negative $K$-groups. In \cite{Wei}, Weibel studied the same for Picard groups and proved that the functor $\Pic$ is contracted on the category of schemes in the sense of Bass, i.e., for every scheme $X,$ there is a natural decomposition $$\Pic({\it X[t, t^{-1}}])\cong \Pic({\it X}) \oplus {\it N}\Pic({\it X}) \oplus {\it N}\Pic({\it X}) \oplus {\it H}_{et}^{1}({\it X}, \mathbb{Z}),$$ where $X[t, t^{-1}]= X \times \mathbb{G}_{m},$ $N\Pic({\it X})=\ker[ \Pic({\it X }\times \mathbb{A}^{1})\to \Pic({\it X})]$ and $\mathbb{Z}$ denotes the constant \'etale sheaf on $X.$ The goal of this article is to prove an analogous result in the equivariant setting. The equivariant Picard group of a $G$-scheme $X,$ denoted by ${\rm Pic}^{\it G}({\it X}),$ is the group of isomorphism classes of $G$-linearized line bundle on $X.$ We show that the functor ${\rm Pic}^{\it G}$ is contracted on the category of $G$-schemes in the sense of Bass. 
More precisely, we prove the following (Theorem \ref{Picg contracted} and Theorem \ref{NL^G=0}):
\begin{theorem}\label{thm1}
 Let $X$ be a $G$-scheme. We write $X[t]$ and $X[t, t^{-1}]$ for $X \times_{k} \mathbb{A}^{1}_{k}$ and $X \times_{k} \mathbb{G}_{m}.$ Then there is a natural split exact sequence
 \begin{equation*}
  0\to {\rm Pic}^{\it G}({\it X}) \to {\rm Pic}^{\it G}({\it X[t]})\oplus {\rm Pic}^{\it G}({\it X[t^{-1}]}) \to {\rm Pic}^{\it G}({\it X[t, t^{-1}]})\to {\it L}{\rm Pic}^{\it G}({\it X}) \to 0
 \end{equation*} with $L{\rm Pic}^{\it G}({\it X})\cong {\it H}_{\mathrm{et}}^{1}(G; X; \mathbb{Z}).$ Moreover, $$L{\rm Pic}^{\it G}({\it X})\cong {\it L}{\rm Pic}^{\it G}({\it X[t]})\cong {\it L}{\rm Pic}^{\it G}({\it X[t, t^{-1}]}).$$
\end{theorem}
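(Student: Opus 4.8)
The plan is to derive the three-fold isomorphism directly from the identification $L\mathrm{Pic}^G(-)\cong H^1_{\mathrm{et}}(G;-;\mathbb{Z})$ furnished by the first part of the theorem (Theorem \ref{Picg contracted}). Since $X[t]$ and $X[t,t^{-1}]$ are themselves $G$-schemes --- with $G$ acting through the $X$-factor and trivially on $\mathbb{A}^1_k$ and $\mathbb{G}_m$ --- applying that identification to each of them yields $L\mathrm{Pic}^G(X[t])\cong H^1_{\mathrm{et}}(G;X[t];\mathbb{Z})$ and $L\mathrm{Pic}^G(X[t,t^{-1}])\cong H^1_{\mathrm{et}}(G;X[t,t^{-1}];\mathbb{Z})$. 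It therefore suffices to prove the purely cohomological assertion that the pullbacks along the $G$-equivariant projections induce isomorphisms
\[
H^1_{\mathrm{et}}(G;X;\mathbb{Z})\;\cong\;H^1_{\mathrm{et}}(G;X[t];\mathbb{Z})\;\cong\;H^1_{\mathrm{et}}(G;X[t,t^{-1}];\mathbb{Z}),
\]
that is, the homotopy invariance and $\mathbb{G}_m$-invariance of equivariant \'etale cohomology with constant coefficients $\mathbb{Z}$ in degree one.

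To separate the group from the geometry I would invoke the Hochschild--Serre type spectral sequence
\[
E_2^{p,q}=H^p\bigl(G,\,H^q_{\mathrm{et}}(X,\mathbb{Z})\bigr)\;\Longrightarrow\;H^{p+q}_{\mathrm{et}}(G;X;\mathbb{Z}),
\]
together with its five-term exact sequence
\begin{gather*}
0\to H^1\bigl(G,H^0_{\mathrm{et}}(X,\mathbb{Z})\bigr)\to H^1_{\mathrm{et}}(G;X;\mathbb{Z})\to \\
H^0\bigl(G,H^1_{\mathrm{et}}(X,\mathbb{Z})\bigr)\xrightarrow{\,d_2\,} H^2\bigl(G,H^0_{\mathrm{et}}(X,\mathbb{Z})\bigr).
\end{gather*}
The equivariant projections $X\times_k\mathbb{A}^1_k\to X$ and $X\times_k\mathbb{G}_m\to X$ induce morphisms of these five-term sequences that respect the $G$-actions, so by the five lemma the whole claim reduces to checking that the non-equivariant comparison maps $H^q_{\mathrm{et}}(X,\mathbb{Z})\to H^q_{\mathrm{et}}(X[t],\mathbb{Z})$ and $H^q_{\mathrm{et}}(X,\mathbb{Z})\to H^q_{\mathrm{et}}(X[t,t^{-1}],\mathbb{Z})$ are isomorphisms of $G$-modules for $q=0,1$.

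For $q=0$ this is immediate: $H^0_{\mathrm{et}}(X,\mathbb{Z})=\mathbb{Z}^{\pi_0(X)}$, and since $\mathbb{A}^1_k$ and $\mathbb{G}_m$ are geometrically connected the set of connected components, with its $G$-action, is unchanged. For $q=1$ the invariance $H^1_{\mathrm{et}}(X,\mathbb{Z})\cong H^1_{\mathrm{et}}(X[t],\mathbb{Z})\cong H^1_{\mathrm{et}}(X[t,t^{-1}],\mathbb{Z})$ is precisely the statement that the non-equivariant functor $H^1_{\mathrm{et}}(-,\mathbb{Z})$ has vanishing $N$- and $L$-parts, which is part of Weibel's analysis in \cite{Wei}; alternatively it follows from the Leray spectral sequence of the projection $\pi\colon X\times_k T\to X$ (for $T=\mathbb{A}^1_k$ or $\mathbb{G}_m$) once one knows $R^0\pi_*\mathbb{Z}=\mathbb{Z}$ and $R^1\pi_*\mathbb{Z}=0$. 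Because all of these maps are induced by genuine morphisms of schemes, they are automatically natural and hence $G$-equivariant.

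I expect the genuine obstacle to be the $\mathbb{G}_m$-case of the degree-one invariance: the Gysin/localization behaviour of $\mathbb{G}_m\hookrightarrow\mathbb{A}^1_k$ threatens an extra summand of $H^1_{\mathrm{et}}(X\times_k\mathbb{G}_m,\mathbb{Z})$ coming from the circle direction, and one must see that it is absent. Concretely, this amounts to the vanishing $R^1\pi_*\mathbb{Z}=0$, equivalently $H^1_{\mathrm{et}}(\mathbb{G}_m,\mathbb{Z})=0$ on geometric fibres; and this holds because $\pi_1^{\mathrm{et}}(\mathbb{G}_m)$ is profinite, so any continuous homomorphism to the discrete torsion-free group $\mathbb{Z}$ has finite --- hence trivial --- image. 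Granting this, feeding the $q=0,1$ isomorphisms back into the five-term sequences gives the desired $H^1_{\mathrm{et}}(G;X;\mathbb{Z})\cong H^1_{\mathrm{et}}(G;X[t];\mathbb{Z})\cong H^1_{\mathrm{et}}(G;X[t,t^{-1}];\mathbb{Z})$, and with the first paragraph this is exactly $L\mathrm{Pic}^G(X)\cong L\mathrm{Pic}^G(X[t])\cong L\mathrm{Pic}^G(X[t,t^{-1}])$ --- in Bass's language, that the contraction $L\mathrm{Pic}^G$ has trivial $N$- and $L$-parts and so reproduces itself.
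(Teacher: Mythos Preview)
Your proposal is correct and mirrors the paper's own argument for Theorem~\ref{NL^G=0}: both reduce the claim, via the identification $L\mathrm{Pic}^G\cong H^1_{\mathrm{et}}(G;-;\mathbb{Z})$ from Theorem~\ref{Picg contracted}, to comparing the five-term exact sequences~(\ref{5-term 1}) for $X$, $X[t]$, and $X[t,t^{-1}]$, using the non-equivariant invariance of $H^0_{\mathrm{et}}(-,\mathbb{Z})$ and $H^1_{\mathrm{et}}(-,\mathbb{Z})\cong L\Pic(-)$ as $G$-modules. The only difference is presentational: the paper cites these invariances directly from Example~\ref{ex1} (i.e.\ Weibel~\cite{Wei}), whereas you supply an independent justification via $R^1\pi_*\mathbb{Z}=0$ and the profiniteness of $\pi_1^{\mathrm{et}}(\mathbb{G}_m)$.
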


The organization of this article is as follows.

In section \ref{base section}, we recall the notion of contracted functor for the category of $G$-rings (resp. $G$-schemes). We also discuss some known examples of contracted functors. 

In section \ref{equi}, we first recall few basics pertaining to group scheme action and quotients by group actions. Next, we discuss equivariant sheaves and cohomology theories which will play an important role to prove our results in the rest of the paper. 

In section \ref{equi swan}, we mainly study the homotopy invariance of the group
 ${\rm Pic}^{\it G}({\it X}).$ A classical theorem of Traverso's says that if $A$ is a seminormal ring then the natural map $\Pic({\it A}) \to \Pic({\it A[t_{1}, t_{2}, \dots, t_{r}]})$ is an isomorphism for $r\geq 0.$ We prove an equivariant version of Traverso's theorem. The following is our main result of section \ref{equi swan} (see Theorem \ref{hom inv for equipic}):
 \begin{theorem}
 Let $X$ be a seminormal $G$-scheme. Then the natural map $$ {\rm Pic}^{\it G}({\it X})\to {\rm Pic}^{\it G}({\it X[t_1, t_2, \dots, t_r]})$$ is an isomorphism for all $r\geq 0.$
\end{theorem}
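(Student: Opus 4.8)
The plan is to reduce to the case $r=1$ and then to compare, via the five lemma, the exact sequences that relate the equivariant Picard group to the ordinary one for $X$ and for $X[t]$. Since it is known that seminormality is stable under polynomial extensions, each scheme $X[t_1,\dots,t_i]$ is again a seminormal $G$-scheme, where $G$ acts through $X$ and trivially on the adjoined coordinates. Writing the map in the statement as the composite
$$ {\rm Pic}^{G}(X)\to {\rm Pic}^{G}(X[t_1])\to\cdots\to {\rm Pic}^{G}(X[t_1,\dots,t_r]), $$
in which every arrow is pullback along a projection, it therefore suffices to prove that ${\rm Pic}^{G}(Y)\to {\rm Pic}^{G}(Y[t])$ is an isomorphism for an arbitrary seminormal $G$-scheme $Y$; the case $r=0$ is the identity.

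For a finite group $G$ acting on $X$, the descent spectral sequence $E_2^{p,q}=H^{p}(G, H^{q}_{et}(X,\mathbb{G}_m))\Rightarrow H^{p+q}_{et}(G;X,\mathbb{G}_m)$, together with the identifications $H^{0}_{et}(X,\mathbb{G}_m)=\mathcal{O}^{*}(X)$, $H^{1}_{et}(X,\mathbb{G}_m)={\rm Pic}(X)$ and $H^{1}_{et}(G;X,\mathbb{G}_m)={\rm Pic}^{G}(X)$, furnishes the four-term exact sequence
$$ 0\to H^{1}(G,\mathcal{O}^{*}(X))\to {\rm Pic}^{G}(X)\to {\rm Pic}(X)^{G}\to H^{2}(G,\mathcal{O}^{*}(X)). $$
I would write down the analogous sequence for $X[t]$ and note that pullback along the $G$-equivariant projection $X[t]\to X$ induces a morphism from the first sequence to the second.

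It then remains to analyze the three flanking comparison maps. Because $X$ is seminormal it is reduced, and for reduced schemes one has $\mathcal{O}^{*}(X[t])=\mathcal{O}^{*}(X)$; this equality is $G$-equivariant, so the induced maps $H^{i}(G,\mathcal{O}^{*}(X))\to H^{i}(G,\mathcal{O}^{*}(X[t]))$ are isomorphisms for all $i$, in particular for $i=1$ and $i=2$. For the remaining term I would invoke the classical (non-equivariant) theorem of Traverso: seminormality of $X$ gives an isomorphism ${\rm Pic}(X)\cong {\rm Pic}(X[t])$, realized as pullback along the projection $X[t]\to X$. As this projection is $G$-equivariant, the isomorphism is one of $G$-modules and restricts to an isomorphism ${\rm Pic}(X)^{G}\cong {\rm Pic}(X[t])^{G}$ on $G$-invariants. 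Feeding these three isomorphisms into the map of four-term exact sequences and applying the five lemma (with a leading zero column) then forces ${\rm Pic}^{G}(X)\to {\rm Pic}^{G}(X[t])$ to be an isomorphism.

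The only genuinely substantial input is the classical Traverso theorem, which I take as known; granting it and the fundamental exact sequence developed in Section \ref{equi}, the argument is a diagram chase. I expect the main point requiring care to be the $G$-equivariance of Traverso's isomorphism: the five lemma needs not merely that ${\rm Pic}(X)$ and ${\rm Pic}(X[t])$ are abstractly isomorphic, but that the canonical comparison map is compatible with the $G$-action, so that it descends to $G$-invariants and fits into the commutative ladder. Verifying this compatibility (and, similarly, the $G$-equivariance of the unit computation) is where the equivariant content of the proof resides.
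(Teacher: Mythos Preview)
Your argument is correct and is essentially the paper's own proof. The paper packages the same diagram chase as two lemmas: first showing $N{\rm Pic}^{G}(X)\cong (N{\rm Pic}(X))^{G}$ for any reduced $G$-scheme by comparing the five-term sequences \eqref{5-term 2} for $X$ and $X[t]$ (exactly your unit computation plus the five lemma), and then invoking the scheme-theoretic Traverso statement $N{\rm Pic}(X)=0$ for seminormal $X$; you simply fold these two steps together.
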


In section \ref{main sec}, we study the contractibility of the functor ${\rm Pic}^{\it G}.$ More explicitly, we prove Theorem \ref{thm1}. For every $G$-scheme $X,$ let $N{\rm Pic}^{\it G}(X)= \ker [{\rm Pic}^{\it G}(X\times_{k} \mathbb{A}_{k}^{1}) \to {\rm Pic}^{\it G}(X)].$ Then there is a natural decomposition (by Theorem \ref{thm1}) $${\rm Pic}^{\it G}({\it X[t, t^{-1}}])\cong {\rm Pic}^{\it G}({\it X}) \oplus {\it N}{\rm Pic}^{\it G}({\it X}) \oplus {\it N}{\rm Pic}^{\it G}({\it X}) \oplus {\it H}_\mathrm{et}^{1}({\it G}; {\it X}, \mathbb{Z}).$$  We also deduce a general formula for the group ${\rm Pic}^{\it G}({\it X[t_{1}, t_{1}^{-1}, t_{2}, t_{2}^{-1}, \dots, t_{m}, t_{m}^{-1}]})$ (see Corollary \ref{formula for Pic^g}). We observe that the cohomological interpretation of the term ${\it L}{\rm Pic}^{\it G}({\it X})$ may fail for the Zariski topology (see Remark \ref{true for nis but not zar}). Further, we study the kernel of the forgetful map $\eta_{X}: {\rm Pic}^{\it G}({\it X}) \to \Pic({\it X}).$ We show that $\ker(\eta_{X})\cong H^{1}(G, H_{zar}^{0}(X, \mathcal{O}_{X}^{\times}))$ and $\ker(\eta_{X})$ is a contracted functor with contraction $H^{1}(G, H^{0}_\mathrm{et}(X, \mathbb{Z}))$ (see Theorem \ref{ker is contracted}). Finally, we discuss a vanishing criterion for the term $L{\rm Pic}^{\it G}(X).$ We show the following (see Corollary \ref{vanishing for hensel}):

\begin{theorem}
 Let $X={\rm Spec}(A)$ be a $G$-scheme. If $A$ is a hensel local ring then $L{\rm Pic}^{\it G}({\it X})=0.$
\end{theorem}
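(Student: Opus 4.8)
The plan is to exploit the identification $L{\rm Pic}^{G}(X)\cong H^{1}_{\mathrm{et}}(G;X;\mathbb{Z})$ furnished by Theorem~\ref{thm1}, which reduces the statement to the purely cohomological assertion that $H^{1}_{\mathrm{et}}(G;X;\mathbb{Z})=0$ for $X=\mathrm{Spec}(A)$ with $A$ hensel local. To compute this equivariant \'etale cohomology group I would invoke the Grothendieck (Hochschild--Serre type) spectral sequence built from the composite of global sections with $G$-invariants,
\begin{equation*}
E_{2}^{p,q}=H^{p}\bigl(G,\,H^{q}_{\mathrm{et}}(X,\mathbb{Z})\bigr)\Longrightarrow H^{p+q}_{\mathrm{et}}(G;X;\mathbb{Z}),
\end{equation*}
whose low-degree exact sequence reads
\begin{equation*}
0\to H^{1}\bigl(G,H^{0}_{\mathrm{et}}(X,\mathbb{Z})\bigr)\to H^{1}_{\mathrm{et}}(G;X;\mathbb{Z})\to H^{0}\bigl(G,H^{1}_{\mathrm{et}}(X,\mathbb{Z})\bigr).
\end{equation*}
It therefore suffices to show that the two flanking terms vanish.

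For the left-hand term, localness of $A$ forces $X$ to be connected, so $H^{0}_{\mathrm{et}}(X,\mathbb{Z})=\mathbb{Z}$; since every $g\in G$ fixes the constant global section, the $G$-action on $\mathbb{Z}$ is trivial, and $H^{1}(G,\mathbb{Z})=\mathrm{Hom}(G,\mathbb{Z})=0$ because $G$ is finite. For the right-hand term I would use the standard identification $H^{1}_{\mathrm{et}}(X,\mathbb{Z})\cong\mathrm{Hom}_{\mathrm{cont}}\bigl(\pi_{1}^{\mathrm{et}}(X),\mathbb{Z}\bigr)$ valid for connected $X$; as $\pi_{1}^{\mathrm{et}}(X)$ is profinite while $\mathbb{Z}$ is discrete and torsion-free, every continuous homomorphism is trivial, so $H^{1}_{\mathrm{et}}(X,\mathbb{Z})=0$. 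This is precisely where the henselian hypothesis gives clean control: the connected finite \'etale covers of $\mathrm{Spec}(A)$ are governed by the residue field, so $\pi_{1}^{\mathrm{et}}(X)$ is identified with the absolute Galois group of the residue field and is in particular profinite. With both flanking terms zero, the exact sequence yields $H^{1}_{\mathrm{et}}(G;X;\mathbb{Z})=0$, and hence $L{\rm Pic}^{G}(X)=0$.

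The only genuinely delicate point is the vanishing of $H^{1}_{\mathrm{et}}(X,\mathbb{Z})$; via Weibel's theorem this is exactly the statement that the non-equivariant contraction $L\Pic(X)$ vanishes for a hensel local ring, so one could alternatively obtain the result by citing that fact together with the computation $L\ker(\eta_{X})\cong H^{1}(G,H^{0}_{\mathrm{et}}(X,\mathbb{Z}))$ from Theorem~\ref{ker is contracted}, and feeding both into the exact sequence induced by applying the contraction functor to the forgetful map $\eta_{X}\colon {\rm Pic}^{G}(X)\to\Pic(X)$. Either route funnels the problem into the same two cohomological vanishings, so the main obstacle is simply to record the profinite-versus-$\mathbb{Z}$ argument carefully and to verify that the edge morphisms of the spectral sequence are the anticipated ones.
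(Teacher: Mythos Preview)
Your proposal is correct and is essentially the paper's own argument: the paper first proves (Theorem~\ref{vanishing of LPic^G}) that for any connected $G$-scheme with $L\Pic(X)=0$ one has $L{\rm Pic}^{G}(X)=0$, via exactly the five-term sequence for the constant sheaf $\mathbb{Z}$ and the observation $H^{1}(G,\mathbb{Z})=0$, and then deduces the corollary by citing Weibel's result $L\Pic(A)=0$ for hensel local $A$; your alternative route through $L\ker(\eta_X)$ is just a repackaging of the same sequence.

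One caution on your direct computation: the identification $H^{1}_{\mathrm{et}}(X,\mathbb{Z})\cong\mathrm{Hom}_{\mathrm{cont}}(\pi_{1}^{\mathrm{et}}(X),\mathbb{Z})$ is \emph{not} valid for arbitrary connected $X$---the nodal curve of Remark~\ref{true for nis but not zar} has $H^{1}_{\mathrm{et}}(X,\mathbb{Z})\cong\mathbb{Z}$ despite $\pi_{1}^{\mathrm{et}}$ being profinite---so profiniteness alone does not give the vanishing. The henselian hypothesis really is doing work at this step, and the cleanest way to cash it in is to cite Weibel's Theorem~2.5, exactly as you suggest in your second paragraph and as the paper does.
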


{\bf Acknowledgements:} The author is grateful to Charles Weibel for his valuable comments and suggestions during the preparation of this article. He would also like to thank Charanya Ravi for fruitful email exchanges. Finally, He would like to thank the referee for valuable comments and suggestions.

 \section{Contracted functors}\label{base section}
 The notion of contracted functor from the category of rings to abelian groups was introduced by H. Bass (see chapter XII of  \cite{Bass}). This notion also makes sense from many categories (e.g., commutative rings, schemes, ring extensions etc.) to any abelian category (e.g., abelian groups, modules, sheaves etc.).  Let us recall the notion of contracted functor from the category of $G$-rings to abelian groups. Here $G$ is any abstract group. 
 Recall that a ring $A$ is said to be $G$-ring if it has a left action of $G$ by ring automorphisms. Let $A$ and $B$ be two $G$-rings. A morphism from $A$ to $B$ is a ring homomorphism $f: A \to B$ such that $f(g.a)=g.f(a)$ for all $g \in G$ and $a\in A.$
 
 Let $F$ be a functor from the category of $G$-rings to abelian groups. We define functors $NF$ and $LF$ as follows:
 $$NF(A)=N_{t}F(A)= \ker [F(A[t]) \stackrel{t\mapsto 1}\to F(A)]\cong \coker [F(A) \stackrel{F(i_{+})}\to F(A[t])];$$
 $$LF(A)= \coker [F(A[t])\oplus F(A[t^{-1}]) \stackrel{{\rm add}}\to F(A[t, t^{-1}])].$$ Here the $G$-action on $A[t]$ induces from $A$ with $g.t=t$ for all $g\in G.$ Clearly, $F(A[t])\cong F(A)\oplus NF(A)$ because the inclusion $A\stackrel{i_{+}}\hookrightarrow A[t]$ has a $G$-linear section $A[t] \stackrel{t\mapsto 1}\longrightarrow A.$ By iterating these functors, one can define $N^{i}F$ and $L^{i}F$ for $i>0.$ More generally, we get the formula 
 \begin{equation}\label{formula for pol}
  F(A[t_1, t_2, \dots, t_r])\cong (1 + N)^{r}F(A) ~{\rm for}~ r\geq 0.
 \end{equation}
 We say that $F$ is a {\it acyclic} functor if the following sequence 
 \begin{equation}\label{main seq}
   0 \to F({\it A}) \stackrel{(+,-)}\to F({\it A[t]}) \times F({\it A[t^{-1}]}) \stackrel{add}\to F({\it A[t, t^{-1}]}) \to {\it L}F({\it A})\to 0
 \end{equation}
is exact for every $G$-ring $A.$ Here $(+, -)(x)=(F(i_{+})(x), -F(i_{-})(x))$ for $x\in F(A).$ We say that $F$ is a {\it contracted} functor if (\ref{main seq}) is naturally split exact, i.e., exact and there is a natural split map $LF(A) \to F(A[t, t^{-1}]).$ Note that if $F$ is a contracted functor then there is a natural decomposition $$ F(A[t, t^{-1}])\cong F(A)\oplus N_{t}F(A)\oplus N_{t^{-1}}F(A) \oplus LF(A).$$ By iterating, we get
\begin{equation}\label{gen form}
 F(A[t_1, t_{1}^{-1}, t_{2}, t_{2}^{-1}, \dots, t_{r}, t_{r}^{-1}])\cong (1 + 2N + L)^{r}F(A).
\end{equation}

Note that the notion of contracted functor can also be defined in a similar way from the category of $G$-schemes to abelian groups. Here are few examples of known contracted functors.
\begin{example}\label{ex1}{\rm
 For a scheme $X,$ let $U(X)= H_{zar}^{0}(X, \mathcal{O}_{X}^{\times}).$ The global units functor $U$ is contracted on the category of schemes with contraction $LU(X)\cong H_{zar}^{0}(X, \mathbb{Z})\cong H_\mathrm{et}^{0}(X, \mathbb{Z}).$ Moreover, $LU(X)\cong LU(X[t])\cong LU(X[t, t^{-1}]),$ i.e., $NLU(X)=L^2U(X)=0$ (see Proposition 7.2 of \cite{Wei}). The functor $\Pic$ is also contracted on the category schemes with contraction $L\Pic({\it X})\cong {\it H}_\mathrm{et}^{1}({\it X}, \mathbb{Z})\cong {\it H}_{\mathrm{nis}}^{1}({\it X}, \mathbb{Z})$ and $L\Pic({\it X})\cong {\it L}\Pic({\it X[t]})\cong {\it L}\Pic({\it X[t, t^{-1}]})$ (see Theorem 7.6 and Proposition 7.7 of \cite{Wei}).}
\end{example}

\begin{example}
 \label{ex3} {\rm For each $n,$ $K_{n}$ is a contracted functor on the category of quasi-projective scheme with contraction $LK_{n}=K_{n-1}$ (see Theorem V.8.3 of \cite{wei 1}).}
\end{example}

\begin{example}
 \label{ex4}{\rm Given a ring extension $f:A \hookrightarrow B,$ let $\mathcal{I}(f)$ be the multiplicative group of invertible $A$-submodule of $B.$ One can check that $\mathcal{I}$ is a functor from the category of ring extensions to abelian groups. The functor $\mathcal{I}$ is contracted with contraction $L\mathcal{I}(f)\cong H_\mathrm{et}^{0}(Spec(A), f_{*}\mathbb{Z}/\mathbb{Z})\cong H_{\mathrm{nis}}^{0}(Spec(A), f_{*}\mathbb{Z}/\mathbb{Z})$(see Theorem 5.1 of \cite{Sw}).
 Write $f[t]$(resp. $f[t, t^{-1}])$) for $A[t]\hookrightarrow B[t]$(resp. $A[t, t^{-1}]\hookrightarrow B[t, t^{-1}])$). Then, we have $L\mathcal{I}(f)\cong L\mathcal{I}(f[t])\cong L\mathcal{I}(f[t, t^{-1}])$(see Proposition 3.4 of \cite{Sw}). A map $f: X \to S$ of schemes is said to be faithful affine if it is affine and the structure map $\mathcal{O}_{S} \to f_{*}\mathcal{O}_{X}$ is injective. More generally, $\mathcal{I}$ can be thought as a functor from the category of faithful affine map of schemes to abelian groups. In fact, $\mathcal{I}$ is a contracted functor on the category of faithful affine map of schemes (see Theorem 5.2 of \cite{Sw}).}
\end{example}

 \section{Preliminaries on $G$-schemes and sheaves}\label{equi}
 Hereafter throughout this article, $k$ denotes a field and $G$ denotes a finite group unless otherwise stated. In this section, we briefly recall some basics on $G$-schemes and sheaves. The details can be found in \cite{FKM, SGA}. Let $Sch/k$ denote the category of separated, finite type schemes over $k.$ Let $Grp$ denote the category of groups. For a group $G,$ define a functor $$G_k: (Sch/k)^{op} \to Grp, T \mapsto G_{k}(T):= {\rm ~the~ group~ of~ locally~ constant~ maps~} f: T \to G,$$ 
 where $G$ has the discrete topology.  Note that $G_{k}$ is represented by a scheme $\amalg_{g\in G} {\rm Spec}(k).$ Hence,  $G$ can be viewed as a group scheme over $k$ whose underlying scheme is $\amalg_{g\in G} {\rm Spec}(k).$

 \subsection*{The category of $G$-schemes:} 
 Suppose $X \in Sch/k.$ Then a morphism $\sigma: G \times _{k} X \to X$ is called an action of $G$ on $X$ if for all $T \in Sch/k,$ the map $\sigma(T): G(T) \times X(T) \to X(T)$ on $T$-valued points defines an action of the group $G(T)$ on the set $X(T).$ We simply write $g.x$ for $\sigma (g, x).$ Let $Sch^{{\it G}}/k$ denote the category of $G$-schemes. More explicitly, an object $X\in Sch^{{\it G}}/k$ is an object of $Sch/k$ equipped with an action of $G$ on $X.$ A morphism between $G$-schemes $f: X \to Y$ is a $G$-linear morphism, i.e., it is a morphism in $Sch/k$ such that $f(g.x)= g.f(x).$
 
 \subsection*{Quotients by group actions:} Let $\sigma: G \times _{k} X \to X$ be a $G$-action on $X.$ Write $pr_{2}$ for the projection map $G\times_{k} X \to X.$ A morphism $q: X \to Y$ in $Sch/k$ is said to be $G$-invariant if $q\circ pr_{2}= q\circ \sigma,$ i.e., $q(g.x)= q(x)$ for all $x\in X(T)$ and $g\in G(T).$ By a quotients, we always mean a categorical quotients. We say that a morphism $q: X \to Y$ is a categorical quotient if $q$ is a $G$-invariant and $q$ is universal; which means that for any $G$-invariant morphism $q^{'}: X \to Y^{'}$ there is a unique $\alpha: Y \to Y^{'}$ such that $q^{'}= \alpha \circ q.$ Note that if a quotient exists then it must be unique upto unique isomorphism. We usually denote the quotient by $q: X \to X/G.$ If $X$ is quasi-projective over $k$ then the quotient $q: X \to X/G$ always exist (because in our case $G$ is a finite group). Moreover, we have the following properties:
 \begin{enumerate}
  \item $q$ is finite and surjective;
  \item $\mathcal{O}_{X/G}= q_{*}(\mathcal{O}_{X})^{{\it G}};$
  \item  If $X= {\rm Spec}(A)$ then $X/G= {\rm Spec} (A^{{\it G}}),$ where $A^{{\it G}}= \{a\in A|~ g.a=a {\rm~ for ~all~} g\in G\}.$
 \end{enumerate}

 \subsection*{$G$-sheaves and $G$-cohomology} Let $X$ be a $G$-scheme with an action map $\sigma.$ Let $\mathcal{F}$ be a $\tau$-sheaf of abelian group. Here $\tau$ is any one of the Zariski, \'etale and Nisnevich Grothendieck topologies on $X.$ We now recall few definitions for which $G$ is not necessarily a finite group. More generally, the following definition also makes sense for any algebraic group. A $G$-linearization of $\mathcal{F}$ is an isomorphism $\phi: \sigma^{*}\mathcal{F}\cong pr_{2}^{*}\mathcal{F}$ of sheaves on $G\times_{k} X$ with the following cocycle condition
  $$ pr_{23}^{*}(\phi)\circ (1 \times \sigma)^{*}(\phi)= (m \times 1)^{*} (\phi).$$ Here $m$ is the multiplication map $G \times_{k} G \to G$ and $pr_{23}$ is the projection to second and third factor $G \times_{k} G \times_{k} X \to G\times_{k} X.$ As in our case $G$ is a finite group, a $G$-linearization of $\mathcal{F}$ is equivalent to a family of isomorphisms $\phi_{g}: g^{*}\mathcal{F} \stackrel{\cong}\to \mathcal{F}$ for each $g\in G$ such that $\phi_{e}=id$ and $\phi_{gh}= \phi_{h}\circ h^{*}(\phi_{g})$ for all $g, h \in G.$ A $G$-sheaf in the $\tau$-topology is a pair $(\mathcal{F}, \phi),$ where $\mathcal{F}$ is a sheaf on $X$ and $\phi$ is a $G$-linearization of $\mathcal{F}.$ A $G$-module on $X$ is a $G$-sheaf $(\mathcal{F}, \phi),$ where $\mathcal{F}$ is an $\mathcal{O}_{X}$-module and the $G$-linearization $\phi: \sigma^{*}\mathcal{F}\cong pr_{2}^{*}\mathcal{F}$ is an isomorphism of $\mathcal{O}_{G\times X}$-modules.

  A morphism $f: (\mathcal{F}_{1}, \phi_{1}) \to (\mathcal{F}_{2}, \phi_{2})$ between $G$-sheaves is a morphism $f: \mathcal{F}_{1} \to \mathcal{F}_{2}$ of sheaves such that $pr_{2}^{*}f\circ \phi_{1}= \phi_{2}\circ \sigma^{*}f.$ We call such a morphism as equivariant morphism. The set of equivariant morphisms from $ (\mathcal{F}_{1}, \phi_{1})$ to $(\mathcal{F}_{2}, \phi_{2})$ is denoted by $Hom_{{\it G}}(\mathcal{F}_{1}, \mathcal{F}_{2}).$ 
  
Let $Ab_{\tau}(G, X)$ denote the category of $G$-sheaves on $X$ in the topology $\tau$ whose objects are $G$-sheaves on $X$ in the topology $\tau$ and morphisms are the equivariant morphisms. The category $Ab_{\tau}(G, X)$ is abelian and it has enough injectives. If $\mathcal{F}$ is a $G$-sheaf then then the group $G$ acts naturally on the space of global sections $\Gamma (X, \mathcal{F}).$ Let $Ab$ denote the category of abelian groups. The invariant global section functor is defined by 
$$ \Gamma_{X}^{{\it G}}: Ab_{\tau}(G, X) \to Ab,  \mathcal{F} \mapsto \Gamma(X, \mathcal{F})^{{\it G}},$$ where $\Gamma(X, \mathcal{F})^{{\it G}}= \{x\in \Gamma(X, \mathcal{F})| g.x=x ~{\rm ~for~ all}~ g\in G\}.$ Note that $\Gamma_{X}^{{\it G}}= (-)^{{\it G}}\circ \Gamma(X, -)$ is a left-exact functor. Then the $\tau$-$ G$-cohomology groups $H_{\tau}^{p}(G; X; \mathcal{F})$ are defined as the right derived functors $H_{\tau}^{p}(G; X; \mathcal{F}):= R^{p}\Gamma_{X}^{{\it G}}\mathcal{F}.$ Consider the following commutative diagram
\label{commutativity}
     $$\xymatrixcolsep{5pc}\xymatrix{
 Ab_{\tau}(G, X) \ar[rd]^{\Gamma_{X}^{{\it G}}} \ar[r]^{\Gamma(X, -)} & Ab \ar[d]^{(-)^{{\it G}}} \\
 &  Ab} $$of categories. The functor $\Gamma(X, -)$ sends injective $G$-sheaves to injective $G$-modules. Now the Grothendieck spectral sequence for the composition $(-)^{{\it G}}\circ \Gamma(X, -)$ gives the first quadrant convergent spectral sequence 
 $$E_{2}^{pq}= H^{p}(G, H_{\tau}^{q}(X, \mathcal{F}))\Rightarrow H_{\tau}^{p+q}(G; X; \mathcal{F}),$$ where $H^{*}(G,-)$ denotes the group cohomology. So, we have the five-term exact sequence
 \small\begin{equation}\label{5-term 1}
  0 \to H^{1}(G, H_{\tau}^{0}(X, \mathcal{F})) \to H_{\tau}^{1}(G; X; \mathcal{F}) \to H^{0}(G, H_{\tau}^{1}(X, \mathcal{F})) \to H^{2}(G, H_{\tau}^{0}(X, \mathcal{F})) \to H_{\tau}^{2}(G; X; \mathcal{F}).
 \end{equation}\normalsize

 \section{Equivariant Picard groups and Seminormality}\label{equi swan}
 
 In this section, we show that for a seminormal $G$-scheme, the homotopy invariance of the equivariant Picard group holds. Let us begin by recalling the definition of equivariant Picard  groups.
 
 It is wellknown that the tensor product of two $G$-linearized line bundles over a $G$-scheme $X$ is also a $G$-linearized line bundle. The dual of any $G$-linearized line bundle is $G$-linearized as well (see p.32 of \cite{FKM}). Thus, the isomorphism classes of $G$-linearized line bundles over $X$ form an abelian group with respect to the tensor product. We call it {\it equivariant Picard group} of $X$ and denote it by ${\rm Pic}^{\it G}({\it X}).$ A $G$-linear map $f: X \to Y$  always induces a group homomorphism $f^*: {\rm Pic}^{\it G}({\it Y}) \to {\rm Pic}^{\it G}({\it X})$ by sending $[(\mathcal{L}, \phi)]$ to $[(f^{*}\mathcal{L},(id_G \times f)^{*} \phi)].$ In fact, ${\rm Pic}^{\it G}$ defines a functor from the category of $G$-schemes to abelian groups. 

 \begin{lemma}\label{coh interpre}
  Let $X$ be a $G$-scheme. There are natural isomorphism
  $$ {\rm Pic}^{\it G}({\it X})\cong {\it H_{zar}^{1}(G; X; \mathcal{O}_{X}^{\times})}\cong {\it H_\mathrm{et}^{1}(G; X; \mathcal{O}_{X}^{\times})} \cong {\it H_{\mathrm{nis}}^{1}(G; X; \mathcal{O}_{X}^{\times})}.$$
 \end{lemma}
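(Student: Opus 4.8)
The plan is to identify ${\rm Pic}^{\it G}({\it X})$ with the first cohomology of the sheaf $\mathcal{O}_X^\times$ equipped with its canonical $G$-linearization, and to show this cohomology is independent of the chosen topology $\tau \in \{\mathrm{zar}, \mathrm{et}, \mathrm{nis}\}$. First I would recall that a $G$-linearized line bundle on $X$ is exactly a $G$-sheaf $(\mathcal{L}, \phi)$ with $\mathcal{L}$ an invertible $\mathcal{O}_X$-module, and that the tensor-product group structure matches the one coming from $H^1_\tau(G; X; \mathcal{O}_X^\times)$, where $\mathcal{O}_X^\times$ is the sheaf of units with the linearization induced by the $G$-action. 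The natural candidate for the proof is therefore to produce the isomorphism ${\rm Pic}^{\it G}({\it X}) \cong H^1_\tau(G; X; \mathcal{O}_X^\times)$ for each $\tau$ separately, and then observe that the three groups agree.

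For the core isomorphism I would use the five-term exact sequence (\ref{5-term 1}) with $\mathcal{F} = \mathcal{O}_X^\times$, which reads
\begin{equation*}
 0 \to H^{1}(G, H_{\tau}^{0}(X, \mathcal{O}_X^\times)) \to H_{\tau}^{1}(G; X; \mathcal{O}_X^\times) \to H^{0}(G, H_{\tau}^{1}(X, \mathcal{O}_X^\times)) \to H^{2}(G, H_{\tau}^{0}(X, \mathcal{O}_X^\times)).
\end{equation*}
Here $H^0_\tau(X, \mathcal{O}_X^\times) = U(X)$ is the group of global units and $H^1_\tau(X, \mathcal{O}_X^\times) = \Pic(X)$ is the ordinary Picard group (using that $\Pic$ agrees in all three topologies for $\mathcal{O}_X^\times$). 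The sequence then exhibits ${\rm Pic}^{\it G}({\it X})$ as an extension of the $G$-invariant ordinary line bundles $\Pic(X)^G$ by the group-cohomology term $H^1(G, U(X))$, which is precisely the difference between a $G$-invariant bundle and a choice of $G$-linearization: the kernel $H^1(G, U(X))$ measures the linearizations on the trivial bundle (compatible $G$-actions on $\mathcal{O}_X$, i.e.\ crossed homomorphisms $G \to U(X)$ modulo principal ones), while the image in $\Pic(X)^G$ records which bundles admit a linearization at all, the obstruction landing in $H^2(G, U(X))$. I would verify, by an explicit cocycle description, that the abelian group ${\rm Pic}^{\it G}({\it X})$ of isomorphism classes of $G$-linearized line bundles sits in exactly this extension and maps isomorphically onto $H^1_\tau(G; X; \mathcal{O}_X^\times)$; concretely, a $G$-linearization is a compatible family $\phi_g: g^*\mathcal{L} \xrightarrow{\cong} \mathcal{L}$ satisfying the cocycle identity $\phi_{gh} = \phi_h \circ h^*(\phi_g)$, which is verbatim a $1$-cocycle for the double complex computing $H^1_\tau(G; X; \mathcal{O}_X^\times)$ as hypercohomology of $G$ acting on the \v{C}ech or derived-functor resolution of $\mathcal{O}_X^\times$.

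The equality across the three topologies then follows because each ingredient is topology-independent: $U(X) = H^0_\tau(X, \mathcal{O}_X^\times)$ and $\Pic(X) = H^1_\tau(X, \mathcal{O}_X^\times)$ are the same for $\tau \in \{\mathrm{zar}, \mathrm{et}, \mathrm{nis}\}$ (Hilbert's Theorem 90 giving the étale statement, and the Nisnevich case being squeezed between), so the $E_2$-pages of the three Grothendieck spectral sequences coincide and the comparison maps of the functoriality in $\tau$ induce isomorphisms on the $H^1_\tau(G; X; \mathcal{O}_X^\times)$ abutment by the five-lemma applied to (\ref{5-term 1}). The main obstacle I anticipate is the bookkeeping in the first part: matching the hand-defined group law on isomorphism classes of linearized bundles with the abstract extension structure coming from the spectral sequence, and in particular checking that the connecting map ${\rm Pic}^{\it G}({\it X}) \to \Pic(X)^G$ (the forgetful map $\eta_X$) and the boundary into $H^2(G,U(X))$ are the geometric maps one expects. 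This is where I would spend the most care, since the identification is conceptually clean but requires a genuine cocycle-level argument rather than a formal diagram chase.
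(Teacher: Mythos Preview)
Your plan is sound, and in fact supplies the argument that the paper itself omits: the paper's ``proof'' of this lemma is nothing more than a citation to Theorem~2.7 of \cite{HVO}. What you outline---construct a comparison map ${\rm Pic}^{\it G}({\it X}) \to H^1_\tau(G;X;\mathcal{O}_X^\times)$ via the \v{C}ech/group-cohomology double complex, observe that both sides sit in the same four-term exact sequence $0 \to H^1(G,U(X)) \to (\,\cdot\,) \to \Pic({\it X})^{G} \to H^2(G,U(X))$, and conclude by the short five-lemma---is essentially the standard argument that the cited reference carries out. Two small cautions. First, watch the order of exposition: your sentence ``The sequence then exhibits ${\rm Pic}^{\it G}({\it X})$ as an extension\dots'' speaks as though the identification with $H^1_\tau$ is already in hand; at that point you only know $H^1_\tau$ sits in the extension, and the parallel statement for ${\rm Pic}^{\it G}$ is a separate (classical) fact that you then match up via the cocycle map. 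Second, for the topology comparison, the full $E_2$-pages do \emph{not} coincide---for instance $E_2^{0,2}=H^0(G,H^2_\tau(X,\mathcal{O}_X^\times))$ sees the Brauer group in the \'etale case but vanishes Zariski-locally---so your phrase ``the $E_2$-pages\dots\ coincide'' is too strong; what you actually need, and what is true, is that the terms $E_2^{1,0}$, $E_2^{0,1}$, $E_2^{2,0}$ appearing in the five-term sequence are independent of $\tau$, and that suffices for the five-lemma on $H^1$.
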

 \begin{proof}
  See Theorem 2.7 of \cite{HVO}.
 \end{proof}

 Thus, we have an exact sequence (by (\ref{5-term 1}))\small
 \begin{equation}\label{5-term 2}
  0 \to H^{1}(G, H_{\tau}^{0}(X, \mathcal{O}_{X}^{\times})) \to{\rm Pic}^{\it G}({\it X}) \to (\Pic({\it X}))^{{\it G}} \to {\it H^{2}(G, H_{\tau}^{0}(X, \mathcal{O}_{X}^{\times}))} \to {\it H_{\tau}^{2}(G; X; \mathcal{O}_{X}^{\times})}.
 \end{equation}\normalsize
 
 We say that a ring $A$ is seminormal if the following holds: whenever $b, c \in A$ satisfy $b^3=c^2$ there exists $a\in A$ such that $b=a^2,$ $c=a^3.$ For example, $\mathbb{C}[t^3-t^2, t^2-t]$ is a seminormal ring. A seminormal ring is necessarily reduced. Seminormality is a local property, i.e., $A$ is seminormal if and only if $A_{p}$ is seminormal for all $p\in Spec(A).$  A scheme $X$ is said to be seminormal if $\mathcal{O}_{X, x}$ is seminormal for all $x\in X.$ Equivalently, $X$ is seminormal if $\Gamma(U, \mathcal{O}_{X})$ is seminormal for each affine open subset $U$ of $X.$ For more details, we refer to \cite{Greco-trv}, \cite{swan}.
 
 \begin{proposition}\label{quotient s.n}
  Let $X$ be an integral quasi-projective $G$-scheme. If $X$ is seminormal then so is $X/G.$
  
  \begin{proof}
   Since $X$ is quasi-projective, the quotient map $q: X \to X/G$ is finite with  $\mathcal{O}_{X/G}= q_{*}(\mathcal{O}_{X})^{{\it G}}.$  We have to show that for every affine open subset $U$ of $X/G,$ $\Gamma(U, \mathcal{O}_{X/G})=\Gamma(X \times_{X/G} U, \mathcal{O}_{X})^{{\it G}}$ is seminormal. 
   Write $A= \Gamma(X \times_{X/G} U, \mathcal{O}_{X}).$ Note that $A$ is a seminormal domain. Let $b, c \in A^{{\it G}}\subset A$ such that $b^3=c^2.$ We may assume that $b,c\neq 0.$ Since $A$ is seminormal, there exists $0\neq a\in A$ such that $b= a^2,$ $c=a^3.$ Pick any $g\in G.$ Let $d= g.a \neq 0$  because $b, c \neq 0.$ Then $d^{3}a^{3}= d^{2}a^{4}.$ We get $g.a=d=a.$ This means that $a \in A^{{\it G}}.$  Hence, $A^{{\it G}}$ is seminormal.
  \end{proof}
\end{proposition}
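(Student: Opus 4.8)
The plan is to reduce the geometric statement to a purely ring-theoretic assertion about rings of invariants and then to settle that assertion using integrality. Since seminormality can be tested on the rings of sections over affine opens, and since the quotient $q: X \to X/G$ is finite (hence affine) with $\mathcal{O}_{X/G}= q_{*}(\mathcal{O}_{X})^{G}$, I would first fix a nonempty affine open $U \subseteq X/G$ and note that $q^{-1}(U)= X\times_{X/G} U$ is again affine, say $q^{-1}(U)= {\rm Spec}(A)$, with $\Gamma(U, \mathcal{O}_{X/G})= A^{G}$. Because $q^{-1}(U)$ is a nonempty open subscheme of the integral seminormal scheme $X$, the ring $A$ is a seminormal domain carrying the induced $G$-action. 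Thus the whole proposition comes down to the algebraic claim: if $A$ is a seminormal \emph{domain} equipped with a $G$-action by ring automorphisms, then $A^{G}$ is seminormal.

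For this core, I would take $b, c \in A^{G}$ with $b^{3}=c^{2}$ and produce the required roots inside $A^{G}$. Since $A$ is seminormal and $b, c \in A$, there is some $a \in A$ with $b=a^{2}$ and $c=a^{3}$. In a domain, $b^{3}=c^{2}$ forces $b,c$ to be simultaneously zero or simultaneously nonzero, so after discarding the trivial case $b=c=0$ (where $a=0 \in A^{G}$) I may assume $a \neq 0$. The remaining task is to verify that this $a$ is $G$-invariant, which immediately yields seminormality of $A^{G}$.

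The heart of the argument, and the only step I expect to present any difficulty, is the $G$-invariance of $a$; here the domain hypothesis is what makes $a$ unique and hence canonical. For any $g \in G$, applying $g$ to $b=a^{2}$ and $c=a^{3}$ and using $b, c \in A^{G}$ gives $(g.a)^{2}=a^{2}$ and $(g.a)^{3}=a^{3}$. Writing $d=g.a$, these combine via $d^{3}a^{3}=a^{6}=d^{2}a^{4}$ into $d^{2}a^{3}(d-a)=0$; since $A$ is a domain with $a \neq 0$ and $d=g.a \neq 0$, the factor $d^{2}a^{3}$ is nonzero, so $d=a$, i.e. $g.a=a$. Therefore $a \in A^{G}$, which shows $A^{G}$ is seminormal. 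Assembling this over all affine opens $U$ of $X/G$ and invoking the locality of seminormality then gives that $X/G$ is seminormal.
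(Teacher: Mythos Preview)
Your proof is correct and follows essentially the same route as the paper's own argument: the same affine reduction to showing $A^{G}$ is seminormal for a seminormal domain $A$, and the same computation $d^{3}a^{3}=d^{2}a^{4}$ (with $d=g.a$) to force $g.a=a$ using integrality. Your write-up is in fact slightly more explicit in spelling out the factorization $d^{2}a^{3}(d-a)=0$ and invoking the domain hypothesis, but there is no substantive difference.
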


Next, we discuss the homotopy invariance of the functor $\Pic^{\it G}.$ Recall the notations from section \ref{base section},  $N\Pic({\it X})= \ker [\Pic({\it X[t]}) \to \Pic({\it X})],$ where $X[t]$ denotes $X \times_{k} \mathbb{A}_{k}^{1}.$ Similarly, we have $N{\rm Pic}^{\it G}(X).$ Since $\pi: X[t] \to X$ admits an equivariant section, we have the formula (\ref{formula for pol}) for the functor ${\rm Pic}^{\it G}.$

The affine version of the following lemma is well known (see \cite{swan}). For lack of a reference, we include a proof for non affine version. 
\begin{lemma}\label{swan for scheme}
 Let $X$ be a seminormal scheme. Then the natural map $$\Pic({\it X}) \to \Pic({\it X[t_1, t_2, \dots, t_r]})$$ is an isomorphism for all $r\geq 0.$
\end{lemma}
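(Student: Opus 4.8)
The plan is to reduce the scheme statement to the known affine case, which is Traverso's theorem as recorded in \cite{swan}. The affine version asserts that for a seminormal ring $A$, the map $\Pic(A) \to \Pic(A[t_1, \dots, t_r])$ is an isomorphism. By induction on $r$ it suffices to treat $r=1$, so I would prove that $\Pic(X) \to \Pic(X[t])$ is an isomorphism whenever $X$ is seminormal, where $X[t] = X \times_k \mathbb{A}^1_k$.

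The key tool is the local-to-global comparison of Picard groups via sheaf cohomology. First I would recall that for any scheme $Y$ there is an exact sequence arising from the Leray spectral sequence (or the local-global spectral sequence for $\mathcal{O}_Y^\times$), namely
\begin{equation*}
 0 \to H^1_{zar}(Y, \mathcal{O}_Y^\times) \to \Pic(Y) \to H^0_{zar}(Y, \mathcal{H}^1),
\end{equation*}
where $\mathcal{H}^1$ is the Zariski sheaf associated to the presheaf $U \mapsto \Pic(U)$. The strategy is to compare this sequence for $Y = X$ with the one for $Y = X[t]$, using the projection $\pi\colon X[t] \to X$. On the level of sheaves, seminormality is a stalk-local condition, so for every point $x \in X$ the local ring $\mathcal{O}_{X,x}$ is seminormal, and the affine Traverso theorem gives $\Pic(\mathcal{O}_{X,x}) \xrightarrow{\sim} \Pic(\mathcal{O}_{X,x}[t])$. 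More precisely, I would show that the Zariski sheaf $\mathcal{H}^1$ on $X$ for the functor $U \mapsto N\Pic(U)$ is the zero sheaf, because its stalk at $x$ is $N\Pic(\mathcal{O}_{X,x}) = \ker[\Pic(\mathcal{O}_{X,x}[t]) \to \Pic(\mathcal{O}_{X,x})] = 0$ by Traverso. Combining this vanishing of stalks with the comparison of the two five-term sequences forces $N\Pic(X) = \ker[\Pic(X[t]) \to \Pic(X)]$ to vanish, and since $\pi$ admits a section the map $\Pic(X) \to \Pic(X[t])$ is split injective, hence an isomorphism.

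The main obstacle I anticipate is controlling the global sections of the sheaf cohomology comparison: knowing that the stalks $N\Pic(\mathcal{O}_{X,x})$ vanish gives that the sheafification of $U \mapsto N\Pic(U)$ is zero, but one must pass carefully from this sheaf-theoretic vanishing to the vanishing of the global group $N\Pic(X)$. This requires that $N\Pic$ be computed as the $H^0$ of the associated sheaf, i.e., that the higher cohomology of $\mathcal{O}^\times$ contribute compatibly under $\pi^*$. The cleanest route is to observe that $\pi_* \mathcal{O}_{X[t]}^\times$ and $\mathcal{O}_X^\times$ have the same higher Zariski cohomology after accounting for the polynomial units, so that the relative term $N\Pic(X)$ injects into $H^0_{zar}(X, \mathcal{H}^1_{N})$ where $\mathcal{H}^1_N$ is the zero sheaf; this forces $N\Pic(X) = 0$. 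I would then conclude by induction, using $X[t_1, \dots, t_r] = (X[t_1, \dots, t_{r-1}])[t_r]$ together with the fact that $X[t_1, \dots, t_{r-1}]$ remains seminormal, which itself follows from the affine statement applied stalkwise.
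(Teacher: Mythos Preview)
Your approach is essentially the same as the paper's: reduce to showing $N\Pic(X)=0$, observe that the Zariski sheafification $\mathcal{NP}ic$ of $U\mapsto N\Pic(U)$ has vanishing stalks by the affine Traverso theorem, and then pass from the sheaf vanishing to the global vanishing. The paper executes the last step by invoking Weibel's result (Theorem~4.7 of \cite{Wei}) that $N\Pic(X)=H^0_{zar}(X,\mathcal{NP}ic)$ for reduced $X$, after noting that seminormal schemes are reduced; your Leray argument for $\pi\colon X[t]\to X$ is the same mechanism, but you should make the reducedness explicit, since the identification $\pi_*\mathcal{O}_{X[t]}^\times\cong\mathcal{O}_X^\times$ (equivalently $NU=0$) uses it. One small point: your first displayed ``local--global'' sequence $0\to H^1_{zar}(Y,\mathcal{O}_Y^\times)\to\Pic(Y)\to H^0_{zar}(Y,\mathcal{H}^1)$ is vacuous as written, since $H^1_{zar}(Y,\mathcal{O}_Y^\times)=\Pic(Y)$ already; just drop it and go straight to the Leray sequence for $\pi$, which is what actually does the work.
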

\begin{proof}
 Since $X$ is seminormal, so is $X\times \mathbb{A}^{r}$ (see Corollary A.1 of \cite{Coq}).  So, it is enough to show that $N\Pic({\it X})=0.$ We know that a seminormal ring is necessarily reduced. Thus, $X$ is reduced. Let $\mathcal{NP}ic_{zar}$ be the Zariski sheafification of the presheaf $U\mapsto N\Pic({\it U}).$ Since $X$ is seminormal, $\mathcal{NP}ic_{zar}=0$ (see  \cite{swan} or Theorem 2.4 of \cite{Coq}). By Theorem 4.7 of \cite{Wei}, $N\Pic({\it X})={\it N}\Pic({\it X_{red}})= {\it H}_{zar}^{0}({\it X}, \mathcal{NP}ic)=0.$
\end{proof}

\begin{lemma}\label{Npic^G=NPic}
 Let $X$ be a reduced $G$-scheme. Then  $N{\rm Pic}^{\it G}({\it X})\cong ({\it N}\Pic({\it X}))^{{\it G}}.$
\end{lemma}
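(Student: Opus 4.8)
The plan is to apply the operation $N$ to the natural five-term exact sequence (\ref{5-term 2}) and then to identify the $N$-part of each of its five terms. Throughout I fix the $G$-equivariant projection $\pi\colon X[t]\to X$ together with a $G$-equivariant section $i\colon X\to X[t]$ (say $t\mapsto 1$), so that every functor $F$ in sight satisfies $F(X[t])\cong F(X)\oplus NF(X)$ via a splitting that is natural and $G$-equivariant. Here I write $U(X)=H_{\tau}^{0}(X, \mathcal{O}_{X}^{\times})$ for the global units, as in (\ref{5-term 2}).

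First I would record that the sequence (\ref{5-term 2}) is natural in the $G$-scheme $X$, being the five-term sequence of the Grothendieck spectral sequence, which is functorial. Consequently the $G$-maps $i$ and $\pi$ induce morphisms of five-term exact sequences, and $i^{*}\pi^{*}=\mathrm{id}$ exhibits the sequence for $X$ as a direct summand of the sequence for $X[t]$, the complementary summand being exactly the sequence obtained by applying $N=\ker[(-)(X[t])\to(-)(X)]$ to each term. Since the sequence for $X[t]$ is exact and a direct sum of such sequences is exact if and only if each summand is, the $N$-applied five-term sequence is again exact:
\begin{equation*}
 0 \to NH^{1}(G, U(X)) \to N{\rm Pic}^{\it G}({\it X}) \to N((\Pic({\it X}))^{{\it G}}) \to NH^{2}(G, U(X)) \to NH_{\tau}^{2}(G; X; \mathcal{O}_{X}^{\times}).
\end{equation*}

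The next step is to compute the outer terms. Because $X$ is reduced, a unit on $X[t]=X\times_{k}\mathbb{A}_{k}^{1}$ is already a unit on $X$, so $NU(X)=0$ (this is the homotopy invariance of the global units functor recalled in Example \ref{ex1}). As the $G$-equivariant splitting gives an isomorphism $U(X[t])\cong U(X)$ of $G$-modules, applying $H^{p}(G,-)$ yields $NH^{p}(G, U(X))=0$ for every $p$; in particular the first and fourth terms vanish. For the middle term I would use that $\Pic(X[t])\cong \Pic(X)\oplus N\Pic(X)$ is a splitting of $G$-modules, so taking $G$-invariants gives $(\Pic(X[t]))^{{\it G}}\cong(\Pic(X))^{{\it G}}\oplus (N\Pic(X))^{{\it G}}$, whence $N((\Pic(X))^{{\it G}})\cong (N\Pic(X))^{{\it G}}$. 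Feeding these identifications back into the exact sequence collapses it to $0\to N{\rm Pic}^{\it G}(X)\to (N\Pic(X))^{{\it G}}\to 0$, which is the desired isomorphism; note that the vanishing of the fourth term already forces surjectivity, so $NH_{\tau}^{2}(G; X; \mathcal{O}_{X}^{\times})$ never needs to be computed.

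The only genuinely delicate point is the first step, namely that applying $N$ to (\ref{5-term 2}) preserves exactness; everything else is a direct computation. I expect the cleanest justification to be the direct-summand argument above, which relies solely on the $G$-equivariance of the section $i$ and the naturality of the five-term sequence, rather than on any term-by-term diagram chase.
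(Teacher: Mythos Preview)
Your proof is correct and follows essentially the same approach as the paper: both use that $U(X)\cong U(X[t])$ as $G$-modules when $X$ is reduced, and then compare the five-term sequences (\ref{5-term 2}) for $X$ and $X[t]$ to extract the isomorphism. The paper's proof is a two-line sketch of exactly this comparison; your direct-summand argument for why the $N$-part of the sequence remains exact simply makes explicit what the paper leaves implicit.
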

\begin{proof}
 Since X is reduced, $H^{0}(X, \mathcal{O}_{X}^{\times})=H^{0}(X[t], \mathcal{O}_{X[t]}^{\times}) .$ By comparing the sequence (\ref{5-term 2}) for $\mathcal{O}_{X}^{\times}$ and $\mathcal{O}_{X[t]}^{\times},$ we get $N{\rm Pic}^{\it G}({\it X})\cong ({\it N}\Pic({\it X}))^{{\it G}}.$
\end{proof}

\begin{theorem}
 \label{hom inv for equipic} Let $X$ be a seminormal $G$-scheme. Then the natural map $$ {\rm Pic}^{\it G}({\it X})\to {\rm Pic}^{\it G}({\it X[t_1, t_2, \dots, t_r]})$$ is an isomorphism for all $r\geq 0.$
\end{theorem}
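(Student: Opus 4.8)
The plan is to reduce the equivariant statement to the non-equivariant one already established in Lemma~\ref{swan for scheme}, using the cohomological description of $\mathrm{Pic}^{G}$ together with the fact that $N\mathrm{Pic}^{G}$ is controlled by $N\Pic$. Since $\pi\colon X[t_1,\dots,t_r]\to X$ admits an equivariant section, the formula \eqref{formula for pol} gives $\mathrm{Pic}^{G}(X[t_1,\dots,t_r])\cong (1+N)^{r}\mathrm{Pic}^{G}(X)$, so the natural map in question is an isomorphism precisely when every iterated $N$-term vanishes. It therefore suffices to prove that $N\mathrm{Pic}^{G}(X)=0$ whenever $X$ is seminormal, and then to observe that the construction can be iterated, since each polynomial variable can be adjoined one at a time.

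First I would reduce to showing $N\mathrm{Pic}^{G}(X)=0$. A seminormal ring is reduced, so a seminormal scheme $X$ is reduced, and hence Lemma~\ref{Npic^G=NPic} applies to give a natural isomorphism $N\mathrm{Pic}^{G}(X)\cong (N\Pic(X))^{G}$. But $X$ is seminormal, so by Lemma~\ref{swan for scheme} (the non-equivariant version of Traverso's theorem for schemes) we have $N\Pic(X)=0$. Taking $G$-invariants of the zero group gives $(N\Pic(X))^{G}=0$, and therefore $N\mathrm{Pic}^{G}(X)=0$.

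To conclude for all $r\geq 0$ I would argue inductively. The base case $r=0$ is trivial. For the inductive step, note that if $X$ is a seminormal $G$-scheme then so is $X\times\mathbb{A}^{r}_{k}$: seminormality of $X$ implies seminormality of $X[t_1,\dots,t_r]$ by Corollary~A.1 of \cite{Coq} (as already invoked in Lemma~\ref{swan for scheme}), and the $G$-action on $X[t_1,\dots,t_r]$ is induced from that on $X$ with each $t_i$ fixed. Applying the vanishing $N\mathrm{Pic}^{G}=0$ to the seminormal $G$-scheme $X[t_1,\dots,t_{r-1}]$ shows that $\mathrm{Pic}^{G}(X[t_1,\dots,t_{r-1}])\to\mathrm{Pic}^{G}(X[t_1,\dots,t_r])$ is an isomorphism, and composing with the inductive hypothesis yields the claim.

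The only real subtlety is ensuring that Lemma~\ref{Npic^G=NPic} is genuinely applicable, i.e.\ that the splitting $\mathrm{Pic}^{G}(X[t])\cong\mathrm{Pic}^{G}(X)\oplus N\mathrm{Pic}^{G}(X)$ and the identification of $N\mathrm{Pic}^{G}(X)$ with $(N\Pic(X))^{G}$ are both compatible with the iteration; this is where one must use that the intermediate schemes $X[t_1,\dots,t_j]$ remain reduced seminormal $G$-schemes, so that each comparison of five-term sequences \eqref{5-term 2} in the proof of Lemma~\ref{Npic^G=NPic} is valid at every stage. Once that compatibility is in place, no genuine computation remains—the heavy lifting has already been done in the non-equivariant Lemma~\ref{swan for scheme}—and the equivariant Traverso theorem follows formally.
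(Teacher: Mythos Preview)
Your proposal is correct and follows exactly the same approach as the paper: the paper's proof simply cites Lemmas~\ref{swan for scheme} and~\ref{Npic^G=NPic}, and your argument is a faithful unpacking of that citation, including the reduction via \eqref{formula for pol} and the observation that seminormality (hence reducedness) is preserved under adjoining polynomial variables.
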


\begin{proof}
 The assertion follows from Lemmas \ref{swan for scheme} and \ref{Npic^G=NPic}.
\end{proof}

\section{Equivariant Picard group is contracted}\label{main sec}
The main goal of this section is to show that the functor ${\rm Pic}^{\it G}$ is contracted in the sense of Bass. 
\begin{proposition}\label{equi MV seq}
 Let $X$ be a $G$-scheme. Suppose that $X= U\cup V$ for two open $G$-invariant subspaces $U$ and $V$ of $X.$ Given a $G$-module $\mathcal{F},$ there is a long exact $\tau$-cohomology sequence (here $\tau \in \{zar, et, nis\})$ \small
 $$ 0 \to H^{0}(G; X; \mathcal{F}) \to H^{0}(G; U; \mathcal{F})\oplus H^{0}(G; V; \mathcal{F}) \to H^{0}(G; U\cap V; \mathcal{F}) \to H^{1}(G; X; \mathcal{F}) \to \dots .$$\normalsize
\end{proposition}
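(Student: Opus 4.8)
The plan is to construct an equivariant Mayer--Vietoris sequence by deriving the invariant global sections functor applied to a short exact sequence of $G$-sheaves. First I would recall that for a $G$-scheme $X = U \cup V$ with $U, V$ open and $G$-invariant, the restriction maps are all $G$-equivariant, so given a $G$-sheaf $\mathcal{F}$ one obtains a short exact sequence of $G$-sheaves
\begin{equation*}
 0 \to \mathcal{F} \to j_{U*}(\mathcal{F}|_U) \oplus j_{V*}(\mathcal{F}|_V) \to j_{U\cap V *}(\mathcal{F}|_{U\cap V}) \to 0,
\end{equation*}
where the first map is the difference of restrictions and the second is the difference on the overlap, and where each pushforward carries its natural $G$-linearization coming from the $G$-invariance of the open sets. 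The point is that this is the standard Mayer--Vietoris short exact sequence of \emph{sheaves}, and the only additional content is that every sheaf and every map in it is $G$-equivariant, which holds precisely because $U$, $V$ and $U \cap V$ are $G$-stable.

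Next I would apply the right derived functors of the invariant global sections functor $\Gamma_X^G$ defined in Section \ref{equi}. Since $Ab_\tau(G, X)$ is abelian with enough injectives, the short exact sequence above yields a long exact sequence in the cohomology groups $H_\tau^p(G; X; -) = R^p \Gamma_X^G(-)$. The remaining task is to identify $R^p \Gamma_X^G$ of the pushforward sheaves $j_{U*}(\mathcal{F}|_U)$ with the cohomology groups $H_\tau^p(G; U; \mathcal{F})$, and similarly for $V$ and $U \cap V$. For this I would argue that $j_{U*}$ sends injective $G$-sheaves on $U$ to $\Gamma_X^G$-acyclic $G$-sheaves on $X$ (indeed, $j_{U*}$ is right adjoint to the exact restriction $j_U^*$, hence preserves injectives in the equivariant categories), and that $\Gamma_X^G \circ j_{U*} = \Gamma_U^G$ on the nose, since invariant global sections over $X$ of a sheaf pushed forward from $U$ are just invariant global sections over $U$. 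Composing these gives the desired identification of the derived functors by a Grothendieck spectral sequence (or directly by the acyclicity argument).

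I expect the main obstacle to be the careful bookkeeping of the $G$-linearizations: one must verify that the pushforward sheaves $j_{U*}(\mathcal{F}|_U)$ genuinely inherit $G$-linearizations making the Mayer--Vietoris maps equivariant, and that $j_{U*}$ preserves injective objects in the equivariant category $Ab_\tau(G, U) \to Ab_\tau(G, X)$ rather than merely in the non-equivariant category. The adjunction $j_U^* \dashv j_{U*}$ must be checked to be compatible with the $G$-actions, so that the usual preservation-of-injectives argument transfers to the equivariant setting; once this is in place the identification $R^p\Gamma_X^G \circ j_{U*} \cong R^p \Gamma_U^G$ follows formally. The rest of the argument is then the purely homological exercise of writing down the long exact sequence of $R^p\Gamma_X^G$ associated to the short exact sequence of $G$-sheaves, beginning in degree zero with $H^0(G; X; \mathcal{F})$ as displayed.
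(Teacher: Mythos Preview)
Your overall strategy is sound and is essentially dual to the paper's, but one technical step does not work as written. You propose to apply $R^{p}\Gamma_{X}^{G}$ to the short exact sequence $0\to\mathcal{F}\to j_{U*}(\mathcal{F}|_{U})\oplus j_{V*}(\mathcal{F}|_{V})\to j_{U\cap V*}(\mathcal{F}|_{U\cap V})\to 0$ and then identify $R^{p}\Gamma_{X}^{G}\bigl(j_{U*}(\mathcal{F}|_{U})\bigr)$ with $H_{\tau}^{p}(G;U;\mathcal{F})$. That identification is false in general: $j_{U*}$ is only left exact, so the Grothendieck spectral sequence for $\Gamma_{U}^{G}=\Gamma_{X}^{G}\circ j_{U*}$ involves the higher direct images $R^{q}j_{U*}(\mathcal{F}|_{U})$ and does not collapse. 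For a concrete (non-equivariant) failure, take $X=\mathbb{A}^{2}$, $U=\mathbb{A}^{2}\setminus\{0\}$, $\mathcal{F}=\mathcal{O}_{X}$: then $j_{U*}\mathcal{O}_{U}\cong\mathcal{O}_{X}$ by Hartogs, so $H^{1}(X,j_{U*}\mathcal{O}_{U})=0$, whereas $H^{1}(U,\mathcal{O}_{U})\neq 0$.

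The paper sidesteps this by using the dual exact sequence $0\to\mathbb{Z}_{U\cap V}\to\mathbb{Z}_{U}\oplus\mathbb{Z}_{V}\to\mathbb{Z}\to 0$ built from $j_{!}$ rather than $j_{*}$, and then applying $Hom_{G}(-,\mathcal{I}^{\bullet})$ to a fixed injective resolution $\mathcal{I}^{\bullet}$ of $\mathcal{F}$; exactness of the resulting sequence of complexes is immediate because each $\mathcal{I}^{n}$ is injective, and the adjunction $(j_{W!},j_{W}^{*})$ gives $Hom_{G}(\mathbb{Z}_{W},\mathcal{I}^{n})\cong(\mathcal{I}^{n}(W))^{G}$ directly. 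Your route can be repaired to reach the same endpoint: fix an injective resolution $\mathcal{I}^{\bullet}$ of $\mathcal{F}$ in $Ab_{\tau}(G,X)$, note that $j_{W}^{*}$ preserves injectives (it has the exact left adjoint $j_{W!}$), so $\mathcal{I}^{\bullet}|_{W}$ is an injective resolution of $\mathcal{F}|_{W}$; now apply your sheaf-level Mayer--Vietoris sequence to each $\mathcal{I}^{n}$ and then $\Gamma_{X}^{G}$, using acyclicity of $\mathcal{I}^{n}$ to get the short exact sequence of complexes $0\to(\mathcal{I}^{\bullet}(X))^{G}\to(\mathcal{I}^{\bullet}(U))^{G}\oplus(\mathcal{I}^{\bullet}(V))^{G}\to(\mathcal{I}^{\bullet}(U\cap V))^{G}\to 0$. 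This is exactly the sequence the paper writes down, and its long exact cohomology sequence is the one claimed.
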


\begin{proof}
  
 Let $ 0 \to \mathcal{F} \to \mathcal{I}^{0} \to \mathcal{I}^{1} \to \dots  $ be an injective resolution on the category of $G$-sheaves. For an invariant open subspace $W$ of $X,$ $j_{W}: W \hookrightarrow X$ denotes the natural $G$-linear inclusion. Write $\mathbb{Z}_{U}= j_{U!}j_{U}^{*}\mathbb{Z},$  $\mathbb{Z}_{V}= j_{V!}j_{V}^{*}\mathbb{Z}$ and  $\mathbb{Z}_{U\cap V}= j_{U\cap V !}j_{U\cap V}^{*}\mathbb{Z},$ where $\mathbb{Z}$ is the constant sheaf with trivial action. We have an exact sequence of $G$-sheaves
 \begin{equation}\label{exact in cons}
   0 \to \mathbb{Z}_{U\cap V} \to \mathbb{Z}_{U} \oplus \mathbb{Z}_{V} \to \mathbb{Z} \to 0.
 \end{equation}
Note that $Hom_{{\it G}}(\mathbb{Z}, \mathcal{I})\cong (\mathcal{I}(X))^{{\it G}}.$ Applying $Hom_{{\it G}}(-, \mathcal{I}^{\bullet})$ to the exact sequence (\ref{exact in cons}), we get a short exact sequence of complexes
$$0\to (\mathcal{I}^{\bullet}(X))^{{\it G}} \to (\mathcal{I}^{\bullet}(U))^{{\it G}}\oplus (\mathcal{I}^{\bullet}(V))^{{\it G}}\to (\mathcal{I}^{\bullet}(U\cap V))^{{\it G}} \to 0.$$ By taking the cohomology we get the assertion.\end{proof}

\subsection{Fact:}\label{import} Suppose $f: \mathcal{F} \to \mathcal{F^{'}}$ is an isomorphism $\tau$-sheaves on $X$ and $\phi: \sigma^{*}\mathcal{F} \cong Pr_{2}^{*}\mathcal{F}$ is a $G$-linearization of $\mathcal{F},$ where $\tau \in \{zar, et, nis\}.$ Then $\phi^{'}=\sigma^{*}(f)^{-1}\circ \phi \circ Pr_{2}^{*}(f)$ defines a $G$-linearization of $\mathcal{F}^{'}.$ In fact, $f: (\mathcal{F}, \phi) \to (\mathcal{F}^{'}, \phi^{'})$ is a $G$-equivariant isomorphism.

Let $F$ be a functor from the category of schemes (resp. $G$-schemes) to abelian groups. Recall that $LF(X)= \coker [F(X[t]) \oplus F(X[t^{-1}]) \to F(X[t, t^{-1}])]$(see section \ref{base section}). 
\begin{theorem}\label{Fundamental exact seq}
 Let $X$ be a $G$-scheme. Then  ${\rm Pic}^{\it G}(\mathbb{P}_{X}^{1})\cong {\rm Pic}^{\it G}({\it X})\oplus ({\it H}^{0}({\it X}, \mathbb{Z}))^{{\it G}}$ and there is a natural exact sequence
 \begin{equation}\label{explicit seq}
  0\to {\rm Pic}^{\it G}({\it X}) \to {\rm Pic}^{\it G}({\it X[t]})\oplus {\rm Pic}^{\it G}({\it X[t^{-1}]}) \to {\rm Pic}^{\it G}({\it X[t, t^{-1}]})\to {\it L}{\rm Pic}^{\it G}({\it X}) \to 0.
 \end{equation}
\end{theorem}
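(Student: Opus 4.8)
The plan is to compare the $G$-equivariant theory with the underlying (non-equivariant) theory of Weibel via the cohomological interpretation of $\mathrm{Pic}^G$ given in Lemma \ref{coh interpre}, together with the Mayer--Vietoris sequence of Proposition \ref{equi MV seq} applied to the standard open cover of $\mathbb{P}^1_X$. First I would establish the splitting $\mathrm{Pic}^G(\mathbb{P}^1_X)\cong \mathrm{Pic}^G(X)\oplus (H^0(X,\mathbb{Z}))^G$. The projection $p:\mathbb{P}^1_X\to X$ is $G$-linear and admits a $G$-linear section (any of the standard sections, with the trivial action on the $\mathbb{P}^1$-coordinate fixed by $G$), so $p^*$ is a split injection $\mathrm{Pic}^G(X)\hookrightarrow \mathrm{Pic}^G(\mathbb{P}^1_X)$, and it remains to identify the complementary summand. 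Covering $\mathbb{P}^1_X=X[t]\cup X[t^{-1}]$ by the two $G$-invariant affine charts whose intersection is $X[t,t^{-1}]$, and using $\mathrm{Pic}^G(-)\cong H^1_\tau(G;-;\mathcal{O}^\times)$, the Mayer--Vietoris long exact sequence reads
\begin{equation*}
\cdots \to H^0(G;X[t,t^{-1}];\mathcal{O}^\times)\to \mathrm{Pic}^G(\mathbb{P}^1_X)\to \mathrm{Pic}^G(X[t])\oplus \mathrm{Pic}^G(X[t^{-1}])\to \mathrm{Pic}^G(X[t,t^{-1}])\to\cdots.
\end{equation*}

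The key computation is to analyze the $H^0$ (units) terms in this sequence. Writing out the degree-zero part, the cokernel of
\begin{equation*}
H^0(G;X[t];\mathcal{O}^\times)\oplus H^0(G;X[t^{-1}];\mathcal{O}^\times)\to H^0(G;X[t,t^{-1}];\mathcal{O}^\times)
\end{equation*}
is exactly $L U^G(X)$ in the equivariant global-units functor, and the analysis of global units on $\mathbb{G}_m$-bundles (units on $X[t,t^{-1}]$ are generated over units on $X$ by powers of $t$, which $G$ fixes) identifies this cokernel with $(H^0(X,\mathbb{Z}))^G$. This pins down the extra summand of $\mathrm{Pic}^G(\mathbb{P}^1_X)$ and simultaneously begins the fundamental sequence: the segment of Mayer--Vietoris immediately to the right of this cokernel gives the injection $\mathrm{Pic}^G(X)\hookrightarrow \mathrm{Pic}^G(X[t])\oplus\mathrm{Pic}^G(X[t^{-1}])$ and the map down to $\mathrm{Pic}^G(X[t,t^{-1}])$.

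To finish, I would splice the Mayer--Vietoris sequence together with the definition of $L\mathrm{Pic}^G(X)$ as the cokernel of the ``add'' map $\mathrm{Pic}^G(X[t])\oplus\mathrm{Pic}^G(X[t^{-1}])\to \mathrm{Pic}^G(X[t,t^{-1}])$. The only point requiring care is exactness at the left end, i.e. that the map $\mathrm{Pic}^G(X)\to \mathrm{Pic}^G(X[t])\oplus\mathrm{Pic}^G(X[t^{-1}])$ is injective with the correct $(+,-)$ form; this follows because the composite with either projection is the pullback along a map admitting a $G$-section, hence split injective, and the Mayer--Vietoris boundary identifies the kernel precisely after the units correction computed above.

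I expect the main obstacle to be the bookkeeping at the units/$H^0$ level: one must check that the $G$-linearization data interacts correctly with Weibel's non-equivariant computation of units on $X[t,t^{-1}]$, so that taking $G$-invariants commutes with the relevant cokernel and produces $(H^0(X,\mathbb{Z}))^G$ rather than some coinvariants or a term with a nontrivial $H^1(G,-)$ correction. Here the reducedness that makes $H^0(X,\mathcal{O}_X^\times)=H^0(X[t],\mathcal{O}_{X[t]}^\times)$ (used in Lemma \ref{Npic^G=NPic}) and the explicit structure of units on a $\mathbb{G}_m$-factor are what keep the $G$-action transparent, since $G$ fixes the coordinate $t$; verifying that these inputs genuinely split off the $(H^0(X,\mathbb{Z}))^G$ summand equivariantly is the crux of the argument. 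The natural splitting of \eqref{explicit seq} into a contracted-functor decomposition is then obtained, as usual, by constructing an explicit natural section of the map to $L\mathrm{Pic}^G(X)$, deferred to the subsequent contractibility theorem.
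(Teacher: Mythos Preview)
Your overall strategy---use the equivariant Mayer--Vietoris sequence of Proposition~\ref{equi MV seq} for the cover $\mathbb{P}^1_X=X[t]\cup X[t^{-1}]$ together with Lemma~\ref{coh interpre}---is exactly the paper's approach. However, two points need correction.

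First, a minor one. Your description ``units on $X[t,t^{-1}]$ are generated over units on $X$ by powers of $t$'' is only valid when $X$ is reduced, and the theorem has no such hypothesis. The paper instead invokes the full contracted decomposition $U(X[t,t^{-1}])\cong U(X)\oplus NU(X)\oplus NU(X)\oplus H^0(X,\mathbb{Z})$ (and $U(X[t^{\pm1}])\cong U(X)\oplus NU(X)$) from Weibel; since this splitting is natural and $G$ fixes $t$, it is $G$-equivariant, so taking $G$-invariants still yields the cokernel $(H^0(X,\mathbb{Z}))^G$ with the nilpotent $NU$-terms cancelling honestly. Your worry about an $H^1(G,-)$ correction is thus unfounded once you use the natural direct-sum splitting rather than a quotient.

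Second, and this is a genuine gap: you have not established that $\mathrm{Pic}^G(X)$ and the image of $\partial$ together \emph{exhaust} $\mathrm{Pic}^G(\mathbb{P}^1_X)$. The Mayer--Vietoris boundary gives you an injection $(H^0(X,\mathbb{Z}))^G\hookrightarrow\mathrm{Pic}^G(\mathbb{P}^1_X)$ whose image is the kernel of the restriction to $\mathrm{Pic}^G(X[t])\oplus\mathrm{Pic}^G(X[t^{-1}])$, and the section of $p$ gives you $\mathrm{Pic}^G(X)$ as a direct summand; but nothing in your argument rules out a further piece of $\mathrm{Pic}^G(\mathbb{P}^1_X)$ lying in neither. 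Without this, exactness of \eqref{explicit seq} at the middle term fails: you need the image of $\mathrm{Pic}^G(\mathbb{P}^1_X)$ in $\mathrm{Pic}^G(X[t])\oplus\mathrm{Pic}^G(X[t^{-1}])$ to coincide with the image of $\mathrm{Pic}^G(X)$. The paper closes this gap by a different route: it compares the five-term exact sequences \eqref{5-term 2} for $X$ and for $\mathbb{P}^1_X$, using that $\pi_*\mathcal{O}_{\mathbb{P}^1_X}^\times\cong\mathcal{O}_X^\times$ as $G$-sheaves (so the flanking $H^i(G,H^0(-,\mathcal{O}^\times))$ terms agree) together with the non-equivariant identity $\mathrm{Pic}(\mathbb{P}^1_X)\cong\mathrm{Pic}(X)\oplus H^0(X,\mathbb{Z})$; a diagram chase then produces the short exact sequence $0\to\mathrm{Pic}^G(X)\to\mathrm{Pic}^G(\mathbb{P}^1_X)\to(H^0(X,\mathbb{Z}))^G\to0$, after which $\partial$ is recognized as a section. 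You should incorporate this five-term comparison (or an equivalent argument bounding $\mathrm{coker}(p^*)$ from above) before the Mayer--Vietoris step.
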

\begin{proof}
 Let $\pi: \mathbb{P}_{X}^{1} \to X$ be the structure $G$-linear morphism. By the above fact (\ref{import}) and the proof of Proposition 7.3 of \cite{Wei}, $\pi_{*}\mathcal{O}_{\mathbb{P}_{X}^{1}}^ {\times}\cong \mathcal{O}_{X}^{\times}$ as $G$-sheaves. Then $H^{0}(X, \mathcal{O}_{X}^{\times})\cong H^{0}(\mathbb{P}_{X}^{1}, \mathcal{O}_{\mathbb{P}_{X}^{1}}^ {\times})$ as $G$-modules. Consider the following commutative diagram (by (
 \ref{5-term 2}))
\tiny  $$\begin{CD}
    0 @>>> H^{1}(G,H^{0}(X, \mathcal{O}_{X}^{\times})) @>>> {\rm Pic}^{\it G}({\it X}) @>>> (\Pic({\it X}))^{{\it G}} @>>>  H^{2}(G,H^{0}(X, \mathcal{O}_{X}^{\times}))\\
    @.   @VV \cong V            @VVV            @VVV          @VV \cong V   \\
     0 @>>> H^{1}(G,H^{0}(\mathbb{P}_{X}^{1}, \mathcal{O}_{\mathbb{P}_{X}^{1}}^ {\times})) @>>> {\rm Pic}^{\it G}({\it \mathbb{P}_{X}^{1}}) @>>> (\Pic({\it \mathbb{P}_{X}^{1}}))^{{\it G}} @>>>  H^{2}(G,H^{0}(\mathbb{P}_{X}^{1}, \mathcal{O}_{\mathbb{P}_{X}^{1}}^ {\times})).
   \end{CD}$$\normalsize
We know $\Pic(\mathbb{P}_{{\it X}}^{1})\cong \Pic({\it X})\oplus {\it H}^{0}({\it X}, \mathbb{Z})$ (see Proposition 7.3 of \cite{Wei}). Thus, we get an exact sequence 
$$ 0 \to {\rm Pic}^{\it G}({\it X}) \to {\rm Pic}^{\it G}( \mathbb{P}_{X}^{1}) \to ({\it H}^{0}({\it X}, \mathbb{Z}))^{{\it G}} \to 0.$$  Set $U(X)= H^{0}(X, \mathcal{O}_{X}^{\times})$ for any scheme $X.$ Further, by applying Proposition \ref{equi MV seq} to the $G$-sheaf $\mathcal{O}_{\mathbb{P}_{X}^{1}}^{\times}$ with the $G$-invariant subspaces $X[t]$ and $X[t^{-1}]$ of $\mathbb{P}_{X}^{1},$ we obtain 
 \begin{multline}\label{upic long}
   0 \to (U(\mathbb{P}_{X}^{1}))^{{\it G}} \to (U(X[t]))^{{\it G}} \oplus (U(X[t^{-1}]))^{{\it G}} \to (U(X[t, t^{-1}]))^{{\it G}} \to {\rm Pic}^{\it G}({\it \mathbb{P}_{X}^{1}})\to \\ {\rm Pic}^{\it G}({\it X[t]})\oplus {\rm Pic}^{\it G}({\it X[t^{-1}]}) \to {\rm Pic}^{\it G}({\it X[t, t^{-1}]})\to \dots.
 \end{multline}
We have $U( \mathbb{P}_{X}^{1})\cong U(X)$ and $U(X[t, t^{-1}])\cong U(X) \oplus NU(X) \oplus NU(X) \oplus {\it H}^{0}({\it X}, \mathbb{Z})$ (see Propositions 7.2 and 7.3 of \cite{Wei}). Therefore, the above sequence (\ref{upic long}) reduces to 
\small
$$ 0 \to ({\it H}^{0}({\it X}, \mathbb{Z}))^{{\it G}} \stackrel{\partial}\to {\rm Pic}^{\it G}( \mathbb{P}_{X}^{1})\to \\ {\rm Pic}^{\it G}({\it X[t]})\oplus {\rm Pic}^{\it G}({\it X[t^{-1}]}) \to {\rm Pic}^{\it G}({\it X[t, t^{-1}]})\to \dots.$$ \normalsize Note that the map $\partial $ is the right inverse of the map ${\rm Pic}^{\it G}( \mathbb{P}_{X}^{1}) \to ({\it H}^{0}({\it X}, \mathbb{Z}))^{{\it G}}.$ Hence the assertion.
\end{proof}

 Let $f: X \to S$ be a morphism between $G$-schemes, i.e., $G$-linear morphism. For an \'etale $G$-sheaf $\mathcal{F}$ on $X,$ $f_{*}\mathcal{F}$ is a $G$-sheaf on $S.$ In fact, $f_{*}$ sends injective $G$-sheaves on $X$ to injective $G$-sheaf on $S$ (see pp. 499 of \cite{FU}). Now the following  commutative diagram 
 \label{commutativity1}
     $$\xymatrixcolsep{5pc}\xymatrix{
 Ab_{\tau}(G, X) \ar[rd]^{\Gamma_{S}^{{\it G}}} \ar[r]^{f_{*}} & Ab_{\tau}(G, X) \ar[d]^{\Gamma_{X}^{{\it G}}} \\
 &  Ab} $$of categories gives a first quadrant convergent spectral sequence
\begin{equation}\label{spec seq 2}
 E_{2}^{pq}= H_\mathrm{et}^{p}(G, S, R^{q}f_{*}\mathcal{F})\Rightarrow H_\mathrm{et}^{p+q}(G; X; \mathcal{F}).
\end{equation}

\begin{remark}\label{true for zar and nis}{\rm
 The spectral sequence (\ref{spec seq 2}) also exists for the Zariski or Nisnevich topology. We stated the \'etale version as it will be used in Theorem \ref{Picg contracted} below.} 
\end{remark}

Write $Y= X[t, t^{-1}],$ $Y^{+}= X[t],$ and $Y^{-}= X[t^{-1}].$ Let $\pi$ (resp. $\pi^{+},$ $\pi^{-}$) denote the structure $G$-linear map $Y \to X$ (resp. $Y^{+} \to X,$ $Y^{-} \to X$). We have the following isomorphisms of \'etale $G$-sheaves on $X$(see Proposition 7.2 of \cite{Wei} and Fact (\ref{import})) 
\begin{equation}\label{nat split unit sheaves}
 \pi_{*}^{+}\mathcal{O}_{Y^{+}}^{\times}\cong \mathcal{O}_{X}^{\times} \times N\mathcal{O}_{X}^{\times},~~ \pi_{*}\mathcal{O}_{Y}^{\times}\cong \mathcal{O}_{X}^{\times} \times N\mathcal{O}_{X}^{\times} \times N\mathcal{O}_{X}^{\times} \times \mathbb{Z}.
\end{equation}

\begin{theorem}\label{Picg contracted}
 ${\rm Pic}^{\it G}$ is a contracted functor on the category $Sch^{{\it G}}/k$ with $L{\rm Pic}^{\it G}({\it X})\cong {\it H}_\mathrm{et}^{1}(G; X; \mathbb{Z}).$ Moreover, the splitting map is given by:
 $$L{\rm Pic}^{\it G}({\it X})\cong {\it H}_\mathrm{et}^{1}(G; X; \mathbb{Z}) \to {\it H}_\mathrm{et}^{1}(G; X; \pi_{*}\mathcal{O}_{X[t, t^{-1}]}^{\times}) \to {\rm Pic}^{\it G}({\it X[t, t^{-1}]}).$$
\end{theorem}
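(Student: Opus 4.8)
The plan is as follows. Theorem \ref{Fundamental exact seq} already supplies the natural exact sequence (\ref{explicit seq}), so ${\rm Pic}^{G}$ is acyclic and only two things remain: to identify the cokernel $L{\rm Pic}^{G}(X)$ with $H^{1}_{\mathrm{et}}(G;X;\mathbb{Z})$, and to produce a natural right inverse of the surjection ${\rm Pic}^{G}(Y)\twoheadrightarrow L{\rm Pic}^{G}(X)$. Both will be extracted from the Leray-type spectral sequence (\ref{spec seq 2}) applied to the structure maps $\pi\colon Y\to X$, $\pi^{+}\colon Y^{+}\to X$ and $\pi^{-}\colon Y^{-}\to X$ with coefficients in the unit sheaf, together with the $G$-sheaf splittings (\ref{nat split unit sheaves}).

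First I would record the edge maps. In the first-quadrant spectral sequence (\ref{spec seq 2}) for $\pi$ with $\mathcal{F}=\mathcal{O}_{Y}^{\times}$, no differential enters or leaves the corner term $E_{2}^{1,0}$, so $E_{2}^{1,0}=E_{\infty}^{1,0}$ injects into the abutment; by Lemma \ref{coh interpre} this is a natural injection $H^{1}_{\mathrm{et}}(G;X;\pi_{*}\mathcal{O}_{Y}^{\times})\hookrightarrow{\rm Pic}^{G}(Y)$, and likewise for $\pi^{\pm}$. Applying $H^{1}_{\mathrm{et}}(G;X;-)$ to the split decompositions (\ref{nat split unit sheaves}) splits these corner terms; in particular the summand $\mathbb{Z}\hookrightarrow\pi_{*}\mathcal{O}_{Y}^{\times}$ (the class of $t$) yields precisely the composite $H^{1}_{\mathrm{et}}(G;X;\mathbb{Z})\to H^{1}_{\mathrm{et}}(G;X;\pi_{*}\mathcal{O}_{Y}^{\times})\hookrightarrow{\rm Pic}^{G}(Y)$ of the statement. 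Call this map $s$; it is injective because the edge map is.

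Next I would identify the cokernel. The open immersions $Y\hookrightarrow Y^{\pm}$ induce a morphism of the three spectral sequences (\ref{spec seq 2}), which on the row $q=0$ is $H^{\bullet}_{\mathrm{et}}(G;X;-)$ of the exact sequence of $G$-sheaves on $X$
\[
 0\to\mathcal{O}_{X}^{\times}\to\pi^{+}_{*}\mathcal{O}_{Y^{+}}^{\times}\oplus\pi^{-}_{*}\mathcal{O}_{Y^{-}}^{\times}\xrightarrow{\ \mathrm{add}\ }\pi_{*}\mathcal{O}_{Y}^{\times}\to\mathbb{Z}\to 0,
\]
the $G$-equivariant refinement of the unit computation behind Example \ref{ex1}; it is split by (\ref{nat split unit sheaves}), its kernel being the diagonal $\mathcal{O}_{X}^{\times}$ and its cokernel the constant sheaf $\mathbb{Z}=$ class of $t$. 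For $q\ge 1$ I expect the restriction maps to assemble to isomorphisms $R^{q}\pi^{+}_{*}\mathcal{O}^{\times}\oplus R^{q}\pi^{-}_{*}\mathcal{O}^{\times}\xrightarrow{\ \sim\ }R^{q}\pi_{*}\mathcal{O}^{\times}$: on strict henselizations $R$ this is the statement that $\Pic(\mathbb{A}^{1}_{R})\oplus\Pic(\mathbb{A}^{1}_{R})\to\Pic(\mathbb{G}_{m,R})$ is an isomorphism, which follows from the decomposition of Example \ref{ex1} together with $\Pic R=0$ and $H^{1}_{\mathrm{et}}(R;\mathbb{Z})=0$.

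Granting these two inputs, the higher rows of the relative spectral sequence vanish and the comparison reduces to the row $q=0$, so that the cokernel of $\mathrm{add}$ on abutments in total degree $1$ is the $(1,0)$-contribution $H^{1}_{\mathrm{et}}(G;X;\mathbb{Z})$ of the cokernel sheaf $\mathbb{Z}$. Matching this with (\ref{explicit seq}) identifies $L{\rm Pic}^{G}(X)\cong H^{1}_{\mathrm{et}}(G;X;\mathbb{Z})$, and tracing the $\mathbb{Z}$-summand shows the isomorphism is induced by the edge map, i.e.\ by $s$; hence $s$ is the asserted natural splitting. I expect the main obstacle to be exactly this last passage from the four-term sheaf sequence to the abutments, since cokernels do not commute with spectral sequences: one must control the kernel $\mathcal{O}_{X}^{\times}$ and cokernel $\mathbb{Z}$ of the row $q=0$ together with the row $q=1$ and the $d_{2}$-differentials feeding into $E_{2}^{2,0}$. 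I would handle this by breaking the four-term sequence into two split short exact sequences of $G$-sheaves, applying the functoriality of (\ref{spec seq 2}), and running the snake lemma across the resulting five-term exact sequences to isolate $H^{1}_{\mathrm{et}}(G;X;\mathbb{Z})$ in the cokernel while checking that $s\big(H^{1}_{\mathrm{et}}(G;X;\mathbb{Z})\big)$ meets the image of $\mathrm{add}$ trivially.
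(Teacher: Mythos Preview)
Your proposal is correct and follows essentially the same route as the paper: both invoke Theorem~\ref{Fundamental exact seq} for acyclicity, compare the Leray spectral sequences~(\ref{spec seq 2}) for $\pi$, $\pi^{+}$, $\pi^{-}$ using the $G$-sheaf splittings~(\ref{nat split unit sheaves}), identify $R^{1}\pi_{*}\mathcal{O}_{Y}^{\times}$ and $R^{1}\pi_{*}^{\pm}\mathcal{O}_{Y^{\pm}}^{\times}$ on strict henselizations (the paper phrases this as $R^{1}\pi_{*}\mathcal{O}_{Y}^{\times}\cong\mathcal{NP}ic\oplus\mathcal{NP}ic$ and $R^{1}\pi_{*}^{+}\mathcal{O}_{Y^{+}}^{\times}\cong\mathcal{NP}ic$), and then run the diagram chase/snake lemma across the resulting five-term exact sequences to isolate $H^{1}_{\mathrm{et}}(G;X;\mathbb{Z})$ and read off the splitting via the edge map. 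The only cosmetic difference is that the paper writes out the comparison diagram explicitly rather than packaging it as a ``relative'' spectral sequence, and it works only with $q=1$ (which is all that the five-term sequences require) rather than asserting the stronger $q\ge 1$ claim you make.
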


\begin{proof}
 We need to show that the exact sequence (\ref{explicit seq})(see Theorem \ref{Fundamental exact seq}) is naturally split for any $G$-scheme $X.$ Let $\mathcal{P}ic[T]$ (resp. $\mathcal{NP}ic$) denote the \'etale sheaf on $X$ associated to the presheaf $U \mapsto \Pic({\it U[t, t^{-1}]})$
 (resp. $ U \mapsto N\Pic({\it U})$). The stalk of  $\mathcal{P}ic[T]$ (resp. $\mathcal{NP}ic$) at a geometric point $\bar{x}$ of $X$ is $\Pic({\it \mathcal{O}_{X, x}^{sh}[t, t^{-1}}])$ (resp. $\it{N}\Pic({\it \mathcal{O}_{X, x}^{sh}})$), where $\mathcal{O}_{X, x}^{sh}$ is the strict henselization of the corresponding local ring $\mathcal{O}_{X,x}.$ There is an exact sequence (see Proposition 7.3 of \cite{Wei})
 $$ 0 \to \Pic({\it X})\oplus {\it N}\Pic({\it X})\oplus {\it N}\Pic({\it X}) \to \Pic({\it X[t, t^{-1}}]) \to {\it L}\Pic({\it X})\to 0$$ for any scheme $X.$ Moreover,  $L\Pic({\it R})=0$ for a hensel local ring $R$ (see Theorem 2.5 of \cite{Wei}). This gives that   $\mathcal{P}ic[T]\cong \mathcal{NP}ic\oplus \mathcal{NP}ic$ as an \'etale sheaves (see Proposition 5.1 of \cite{Wei}).  The stalk $(R^{1}\pi_{*}^{+}\mathcal{O}_{Y^{+}}^{\times})_{\bar{x}}$ ~(resp. $(R^{1}\pi_{*}^{+}\mathcal{O}_{Y}^{\times})_{\bar{x}}$) at a geometric point $\bar{x}$ of $X$ is isomorphic to $\Pic({\it \mathcal{O}_{X, x}^{sh}[t]})$ (resp. $\Pic({\it \mathcal{O}_{X, x}^{sh}[t, t^{-1}]}).$ Note that both $R^{1}\pi_{*}^{+}\mathcal{O}_{Y^{+}}^{\times}$ and $R^{1}\pi_{*}^{+}\mathcal{O}_{Y}^{\times}$ are $G$-sheaves. Hence by using the Fact (\ref{import}), we get
 $$R^{1}\pi_{*}^{+}\mathcal{O}_{Y^{+}}^{\times}\cong \mathcal{NP}ic ~
 {\rm and}~ R^{1}\pi_{*}\mathcal{O}_{Y}^{\times}\cong \mathcal{P}ic[T]\cong \mathcal{NP}ic\oplus \mathcal{NP}ic$$ as $G$-sheaves. We compare the spectral sequences (\ref{spec seq 2}) for $\pi,$ $\pi^{+}$ and $\pi^{-},$  and get the following exact diagram (using Theorem \ref{Fundamental exact seq})
  \tiny
 $$\begin{CD} @. 0  @. 0 \\
 @.  @VVV   @VVV  \\
  0 @>>> {\rm Pic}^{\it G}({\it X})\oplus{\it  H_\mathrm{et}^{1}(G; X, \mathcal{N}\mathcal{O}_{X}^{\times})}\oplus {\it H_\mathrm{et}^{1}(G; X, \mathcal{N}\mathcal{O}_{X}^{\times})} @>>> H_\mathrm{et}^{1}(G;X, \pi_{*}\mathcal{O}_{Y}^{\times}) @>>> H_\mathrm{et}^{1}(G;X, \mathbb{Z}) @>>> 0\\
  @. @VVV      @VVV     @VVV  \\
  0 @>>> {\rm Pic}^{\it G}({\it X})\oplus {\it N}{\rm Pic}^{\it G}({\it X}) \oplus {\it N}{\rm Pic}^{\it G}({\it X}) @>>> {\rm Pic}^{\it G}({\it X[t, t^{-1}]}) @>>> L{\rm Pic}^{\it G}({\it X}) @>>> 0 \\
  @. @VVV  @VVV @. \\
  @. H_\mathrm{et}^{0}(G; X; \mathcal{NP}ic)\oplus H_\mathrm{et}^{0}(G; X; \mathcal{NP}ic) @>>=> H_\mathrm{et}^{0}(G; X; \mathcal{NP}ic)\oplus H_\mathrm{et}^{0}(G; X; \mathcal{NP}ic)\\
  @. @VVV @VVV @. \\
  0 @>>> H_\mathrm{et}^{2}(G; X; \mathcal{O}_{X}^{\times} \times N\mathcal{O}_{X}^{\times} \times N\mathcal{O}_{X}^{\times}) @>>> H_\mathrm{et}^{2}(G; X; \mathcal{O}_{X}^{\times} \times N\mathcal{O}_{X}^{\times} \times N\mathcal{O}_{X}^{\times}\times \mathbb{Z}) 
 \end{CD}.$$
\normalsize
 Observe that the first row of the above commutative diagram is naturally split exact (see (\ref{nat split unit sheaves})). A diagram chase implies that ${\it H}_\mathrm{et}^{1}(G; X; \mathbb{Z})\cong L{\rm Pic}^{\it G}({\it X}).$  Finally, we obtain our desired splitting $L{\rm Pic}^{\it G}({\it X}) \to {\rm Pic}^{\it G}({\it X[t, t^{-1}]})$ as the composition of the following maps
 $L{\rm Pic}^{\it G}({\it X})\stackrel{\cong}\to {\it H}_\mathrm{et}^{1}(G; X; \mathbb{Z}) \to {\it H}_\mathrm{et}^{1}(G; X; \pi_{*}\mathcal{O}_{Y}^{\times}) \to {\rm Pic}^{\it G}({\it X[t, t^{-1}]}).$\end{proof}

\begin{remark}\label{true for nis but not zar}{\rm 
 The argument given above for the \'etale topology also works for the Nisnevich topology (see Remark \ref{true for zar and nis}). Therefore, we get $L{\rm Pic}^{\it G}({\it X})\cong {\it H}_\mathrm{nis}^{1}(G; X; \mathbb{Z}).$ The result may fail for the Zariski topology. For example, consider the nodal curve $X= {\rm Spec}(k[x, y]/ (y^2-x^2-x^3))$ with $\mathbb{Z}_{2}$-action given by $(x, y)\mapsto (x, -y).$ Then $H_\mathrm{et}^{1}(X, \mathbb{Z})\cong L\Pic(X)=\mathbb{Z}$ (see Remark 5.5.2 of \cite{Wei}) and $H_{zar}^{1}(X, \mathbb{Z})=0$ because $X$ is an integral scheme. By considering the exact sequence (\ref{5-term 1}) for the Zariski topology with constant sheaf $\mathbb{Z},$ we get
 
 $$0 \to H^{1}(\mathbb{Z}_{2}, \mathbb{Z}) \to H_{zar}^{1}(\mathbb{Z}_{2}; X; \mathbb{Z}) \to H^{0}(\mathbb{Z}_{2}, H_{zar}^{1}(X, \mathbb{Z})) \to \dots .$$ Note that $H^{1}(\mathbb{Z}_{2}, \mathbb{Z})=0.$ Hence, $H_{zar}^{1}(\mathbb{Z}_{2}; X; \mathbb{Z})=0.$ On the other hand, the exact sequence (\ref{5-term 1}) for the \'etale topology implies the following 
 $$0 \to H^{1}(\mathbb{Z}_{2}, \mathbb{Z}) \to H_\mathrm{et}^{1}(\mathbb{Z}_{2}; X; \mathbb{Z}) \to H^{0}(\mathbb{Z}_{2},  \mathbb{Z}) \to H^{2}(\mathbb{Z}_{2},  \mathbb{Z}) .$$ Therefore, $L\Pic^{{\it \mathbb{Z}_{2}}}({\it X})\cong {\it H}_\mathrm{et}^{1}(\mathbb{Z}_{2}; {\it X}; \mathbb{Z})\neq 0$ because $H^{2}(\mathbb{Z}_{2}, \mathbb{Z})= \mathbb{Z}_{2}$ (see Example 6.2.3 of \cite{Weihom}).}
\end{remark}

\begin{theorem}\label{NL^G=0}
 For a $G$-scheme $X,$  $L{\rm Pic}^{\it G}({\it X})\cong {\it L}{\rm Pic}^{\it G}({\it X[t]})\cong {\it L}{\rm Pic}^{\it G}({\it X[t, t^{-1}]}).$ In otherwords, $NL{\rm Pic}^{\it G}({\it X})= {\it L}^{2}{\rm Pic}^{\it G}({\it X})=0.$
\end{theorem}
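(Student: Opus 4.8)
The plan is to reduce everything to the identification $L\mathrm{Pic}^{G}(-)\cong H^{1}_{\mathrm{et}}(G;-;\mathbb{Z})$ furnished by Theorem \ref{Picg contracted}. Since $X[t]$ and $X[t,t^{-1}]$ are themselves $G$-schemes (with $g\cdot t=t$), that theorem applies to them as well, so that $L\mathrm{Pic}^{G}(X[t])\cong H^{1}_{\mathrm{et}}(G;X[t];\mathbb{Z})$ and $L\mathrm{Pic}^{G}(X[t,t^{-1}])\cong H^{1}_{\mathrm{et}}(G;X[t,t^{-1}];\mathbb{Z})$. It therefore suffices to prove that the pullbacks along the structure morphisms $\pi^{+}\colon X[t]\to X$ and $\pi\colon X[t,t^{-1}]\to X$ are isomorphisms
\begin{equation*}
H^{1}_{\mathrm{et}}(G;X;\mathbb{Z})\xrightarrow{\ \sim\ }H^{1}_{\mathrm{et}}(G;X[t];\mathbb{Z}),\qquad H^{1}_{\mathrm{et}}(G;X;\mathbb{Z})\xrightarrow{\ \sim\ }H^{1}_{\mathrm{et}}(G;X[t,t^{-1}];\mathbb{Z}).
\end{equation*}
Each of these maps is split injective, since the section $t\mapsto 1$ is $G$-equivariant and splits $\pi^{+}$ (resp.\ $\pi$); the content is to prove surjectivity.

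Next I would feed the constant $G$-sheaf $\mathbb{Z}$ (with trivial action) into the spectral sequence (\ref{spec seq 2}) for $\pi^{+}$, namely
\begin{equation*}
E_{2}^{pq}=H^{p}_{\mathrm{et}}(G;X;R^{q}\pi^{+}_{*}\mathbb{Z})\Rightarrow H^{p+q}_{\mathrm{et}}(G;X[t];\mathbb{Z}),
\end{equation*}
and the analogous one for $\pi$. The input needed is the computation of the low-degree higher direct images as $G$-sheaves on $X$: one has $R^{0}\pi^{+}_{*}\mathbb{Z}\cong\mathbb{Z}$ because the geometric fibres $\mathbb{A}^{1}$ (resp.\ $\mathbb{G}_{m}$) are connected, and $R^{1}\pi^{+}_{*}\mathbb{Z}=0$. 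The vanishing is checked on stalks, where it reads $H^{1}_{\mathrm{et}}(\mathbb{A}^{1}_{R},\mathbb{Z})=0$ (resp.\ $H^{1}_{\mathrm{et}}(\mathbb{G}_{m,R},\mathbb{Z})=0$) for $R=\mathcal{O}_{X,x}^{sh}$ strictly henselian; this is precisely Weibel's homotopy and $\mathbb{G}_{m}$-invariance of $H^{1}_{\mathrm{et}}(-,\mathbb{Z})$, i.e.\ the statement $NL\mathrm{Pic}=L^{2}\mathrm{Pic}=0$ recalled in Example \ref{ex1}, which I would cite rather than reprove. The $G$-linearizations on these direct images are supplied by Fact (\ref{import}), and the same Fact shows that the $G$-structure on $R^{0}\pi^{+}_{*}\mathbb{Z}$ is the trivial one, matching the constant sheaf $\mathbb{Z}$ on $X$.

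With $R^{1}\pi^{+}_{*}\mathbb{Z}=0$ in hand, the five-term exact sequence attached to (\ref{spec seq 2}) collapses to
\begin{equation*}
0\to H^{1}_{\mathrm{et}}(G;X;R^{0}\pi^{+}_{*}\mathbb{Z})\to H^{1}_{\mathrm{et}}(G;X[t];\mathbb{Z})\to H^{0}_{\mathrm{et}}(G;X;R^{1}\pi^{+}_{*}\mathbb{Z})=0,
\end{equation*}
and together with $R^{0}\pi^{+}_{*}\mathbb{Z}\cong\mathbb{Z}$ this yields the first isomorphism; the identical argument for $\pi$ gives the second. The reformulation $NL\mathrm{Pic}^{G}(X)=L^{2}\mathrm{Pic}^{G}(X)=0$ is then purely formal: the first isomorphism says the split injection $L\mathrm{Pic}^{G}(X)\to L\mathrm{Pic}^{G}(X[t])$ is bijective, so its kernel $NL\mathrm{Pic}^{G}(X)$ vanishes; and since $L\mathrm{Pic}^{G}(X[t,t^{-1}])\cong L\mathrm{Pic}^{G}(X)$ with the ``add'' map already surjective on each factor, the cokernel $L^{2}\mathrm{Pic}^{G}(X)$ vanishes as well.

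The main obstacle is the stalkwise vanishing $R^{1}\pi_{*}\mathbb{Z}=0$ with integral coefficients: for a torsion-free constant sheaf this is genuinely more delicate than the torsion case (it ultimately reflects that $\mathbb{Z}$-torsors over $\mathbb{A}^{1}_{R}$ and $\mathbb{G}_{m,R}$ are trivial, the fibres being connected and, where normal, having profinite étale fundamental group), so I would route it through Weibel's established computation rather than argue it afresh. The one genuinely equivariant point to verify is that the $G$-action transported through Fact (\ref{import}) onto $R^{0}\pi_{*}\mathbb{Z}$ is untwisted, since otherwise the edge identification would be with a twisted form rather than with $H^{1}_{\mathrm{et}}(G;X;\mathbb{Z})$ itself.
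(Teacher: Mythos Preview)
Your argument is correct, but it runs through a different spectral sequence than the paper's. The paper compares the five-term sequence~(\ref{5-term 1}) (arising from the composition $(-)^{G}\circ\Gamma(X,-)$) for $X$, $X[t]$, and $X[t,t^{-1}]$: the outer terms involve only the ordinary groups $H^{0}_{\mathrm{et}}(-,\mathbb{Z})$ and $H^{1}_{\mathrm{et}}(-,\mathbb{Z})\cong L\Pic(-)$, and Example~\ref{ex1} already says these are unchanged under $(-)[t]$ and $(-)[t,t^{-1}]$, so a four-lemma chase (using the $t\mapsto 1$ retraction for split injectivity) finishes immediately. You instead feed $\mathbb{Z}$ into the Leray-type sequence~(\ref{spec seq 2}) for $\pi^{+}$ and $\pi$, reducing to the stalkwise vanishing $R^{1}\pi_{*}\mathbb{Z}=0$; this is equally valid and rests on the same Weibel input, but it costs you the extra check---which you rightly flag---that the induced $G$-linearization on $R^{0}\pi_{*}\mathbb{Z}\cong\mathbb{Z}$ is the trivial one. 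The paper's route is marginally shorter because it quotes the global invariance statements verbatim rather than localising them to strict henselisations. One cosmetic slip in your last paragraph: $NL\mathrm{Pic}^{G}(X)$ is the \emph{cokernel}, not the kernel, of the split inclusion $L\mathrm{Pic}^{G}(X)\hookrightarrow L\mathrm{Pic}^{G}(X[t])$; of course bijectivity of that inclusion kills both.
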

\begin{proof}
 We have (see Example \ref{ex1}) $$H_\mathrm{et}^{0}(X, \mathbb{Z})\cong H_\mathrm{et}^{0}(X[t], \mathbb{Z})\cong H_\mathrm{et}^{0}(X[t, t^{-1}], \mathbb{Z})$$ and $$L\Pic({\it X})\cong {\it L}\Pic({\it X[t]})\cong {\it L}\Pic({\it X[t, t^{-1}}]).$$ By comparing the sequence (\ref{5-term 1}) for $X,$ $X[t]$ and $X[t, t^{-1}]$ with constant $G$-sheaf $\mathbb{Z},$ we get ${\it H}_\mathrm{et}^{1}(G; X; \mathbb{Z})\cong {\it H}_\mathrm{et}^{1}(G; X[t]; \mathbb{Z})\cong {\it H}_\mathrm{et}^{1}(G; X[t, t^{-1}]; \mathbb{Z}).$ Hence the assertion by Theorem \ref{Picg contracted}.
\end{proof}

\begin{corollary}\label{formula for Pic^g}
 Let $X$ be a $G$-scheme. Then there is a natural decomposition\small
 $${\rm Pic}^{\it G}({\it X[t_{1}, t_{1}^{-1}, t_{2}, t_{2}^{-1}, \dots, t_{m}, t_{m}^{-1}}])\cong {\rm Pic}^{\it G}({\it X}) \oplus \coprod_{k=1}^{m} \coprod_{i=1}^{2k {m \choose k}}{\it N}^{k}{\rm Pic}^{\it G}({\it X})\oplus \coprod_{i=1}^{m}{\it H}_\mathrm{et}^{1}({\it G; X}; \mathbb{Z}).$$ \normalsize
\end{corollary}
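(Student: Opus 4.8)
The plan is to read off the formula formally from the general decomposition of contracted functors, using only the two vanishing statements already established for $\mathrm{Pic}^{G}$. Since $\mathrm{Pic}^{G}$ is contracted on $Sch^{G}/k$ (Theorem \ref{Picg contracted}), the iterated formula (\ref{gen form}) supplies a natural isomorphism
$$\mathrm{Pic}^{G}(X[t_{1},t_{1}^{-1},\dots,t_{m},t_{m}^{-1}])\cong (1+2N+L)^{m}\,\mathrm{Pic}^{G}(X),$$
so the whole content of the corollary is to expand $(1+2N+L)^{m}$ and discard the monomials that act as zero on $\mathrm{Pic}^{G}$.

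First I would record the relations coming from Theorem \ref{NL^G=0}, namely $L^{2}\mathrm{Pic}^{G}(X)=0$ and $NL\mathrm{Pic}^{G}(X)=0$. The operators $N$ and $L$ are built by adjoining independent variables, so they commute up to natural isomorphism; this is exactly the commutativity that makes the multinomial meaning of $(1+2N+L)^{m}$ unambiguous, and it gives $N^{b}L^{c}\mathrm{Pic}^{G}\cong L^{c}N^{b}\mathrm{Pic}^{G}$. Consequently any monomial $N^{b}L^{c}$ with $c\geq 2$ annihilates $\mathrm{Pic}^{G}(X)$, since one may move both copies of $L$ onto $\mathrm{Pic}^{G}$ and use $L^{2}\mathrm{Pic}^{G}=0$; and any monomial with $c=1,\ b\geq 1$ also vanishes, since moving the single $L$ inside produces $N(L\mathrm{Pic}^{G})=NL\mathrm{Pic}^{G}=0$. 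Expanding
$$(1+2N+L)^{m}=\sum_{a+b+c=m}\frac{m!}{a!\,b!\,c!}\,2^{b}N^{b}L^{c}$$
and retaining only the surviving monomials leaves the pure powers $N^{k}$ (the terms with $c=0$), whose total multiplicity is the coefficient of $N^{k}$ in $(1+2N)^{m}$, namely $\binom{m}{k}\,2^{k}$, together with the single term $L$ (the term with $b=0,\ c=1$), which occurs with multiplicity $m$. Substituting $L\mathrm{Pic}^{G}(X)\cong H^{1}_{\mathrm{et}}(G;X;\mathbb{Z})$ from Theorem \ref{Picg contracted} then yields the asserted direct sum, with the summand $H^{1}_{\mathrm{et}}(G;X;\mathbb{Z})$ appearing $m$ times.

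Naturality of the decomposition needs no separate argument, as it is inherited from the naturality of the splittings in (\ref{gen form}) and in Theorem \ref{Picg contracted}. The only delicate point is the bookkeeping: one must be certain that the mixed monomials $N^{b}L^{c}$ with $b,c\geq 1$ and all higher powers of $L$ genuinely vanish, which is precisely where the commutativity of $N$ and $L$ combines with Theorem \ref{NL^G=0}. As a cross-check I would recompute the multiplicities by induction on $m$: granting the decomposition for $m-1$, apply the one-variable splitting to each summand, collapse every $L$-summand using $L\mathrm{Pic}^{G}(Y[t])\cong L\mathrm{Pic}^{G}(Y)$ and $NL\mathrm{Pic}^{G}=0$, and solve the resulting recursion for the multiplicity of each $N^{k}\mathrm{Pic}^{G}(X)$, which again comes out to $\binom{m}{k}\,2^{k}$.
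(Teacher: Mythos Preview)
Your argument is correct and follows exactly the route of the paper's (very terse) proof: invoke Theorem~\ref{Picg contracted} for contractedness, apply the general iteration~(\ref{gen form}), and collapse the expansion using the vanishing $NL\mathrm{Pic}^{G}=L^{2}\mathrm{Pic}^{G}=0$ from Theorem~\ref{NL^G=0}. You simply make the multinomial bookkeeping explicit, which the paper leaves to the reader.

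One remark worth making: your computation gives the multiplicity of $N^{k}\mathrm{Pic}^{G}(X)$ as $2^{k}\binom{m}{k}$, whereas the displayed statement in the corollary prints $2k\binom{m}{k}$. Your number is the correct one (the two expressions first differ at $m=k=3$, where $(1+2N)^{3}$ visibly contributes eight copies of $N^{3}$, not six), so the discrepancy is a typographical slip in the paper rather than an error in your proof.
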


\begin{proof}
 By Theorem \ref{Picg contracted}, ${\rm Pic}^{\it G}$ is a contracted functor on $Sch^{{\it G}}/k.$ The result is now clear from (\ref{gen form}) and Theorem \ref{NL^G=0}.
\end{proof}

Given a $G$-scheme, there is a natural homomorphism $\eta_{X}: {\rm Pic}^{\it G}(X) \to \Pic(X)$ sending   $[(\mathcal{L}, \phi)]$ to $[\mathcal{L}].$ So, we get a natural transformation $\eta: {\rm Pic}^{\it G} \Rightarrow \Pic$ of functors on  $Sch^{{\it G}}/k.$ By Fact \ref{import}, the kernel of $\eta_{X}$ consists of the isomorphism classes of $G$-linearization of $\mathcal{O}_{X}.$ The exact sequence (\ref{5-term 2}) implies that $\ker(\eta_{X})\cong H^{1}(G, H_{\tau}^{0}(X, \mathcal{O}_{X}^{\times})),$ where $\tau \in \{zar, et, nis\}.$  Next, we show that the kernel $\ker(\eta)$ is a contracted functor on $Sch^{{\it G}}/k.$

Let $F$ and $F^{'}$ be two contracted functors on some category $\mathcal{C}$ (e.g., rings or schemes). A morphism between contracted functors $F$ and $F^{'}$ is a natural transformation $\eta: F \Rightarrow F^{'}$ such that the following diagram commutes 
$$\begin{CD}
 LF(X) @>>> F(X[t, t^{-1}])\\
 @V L\eta_{X}VV    @V \eta_{X[t, t^{-1}]}VV \\
 LF^{'}(X) @>>> F^{'}(X[t, t^{-1}])
\end{CD}$$ for all $X\in {\rm {ob}}(\mathcal{C}).$

\begin{lemma}\label{forgetful map is nat trns}
 The natural transformation $\eta: {\rm Pic}^{\it G} \Rightarrow \Pic$ is a morphism of contracted functors on $Sch^{{\it G}}/k.$
\end{lemma}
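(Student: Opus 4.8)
The plan is to verify directly the square that defines a morphism of contracted functors, i.e.\ to show that for every $G$-scheme $X$ the diagram
$$\begin{CD}
L{\rm Pic}^{\it G}({\it X}) @>>> {\rm Pic}^{\it G}({\it X[t, t^{-1}]})\\
@V L\eta_{X}VV @V \eta_{X[t, t^{-1}]}VV \\
L\Pic({\it X}) @>>> \Pic({\it X[t, t^{-1}]})
\end{CD}$$
commutes, the horizontal arrows being the respective splitting maps. First I would record that $\Pic$, which is contracted on $Sch/k$ by Example \ref{ex1}, is contracted on $Sch^{{\it G}}/k$ as well: the operations $N$ and $L$ involve only the base changes $X \mapsto X[t], X[t^{-1}], X[t, t^{-1}]$, and these commute with the forgetful functor $Sch^{{\it G}}/k \to Sch/k$, so Weibel's splitting for the underlying scheme serves here verbatim and the square makes sense.

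The core of the argument is to identify all four maps cohomologically and then to factor the square through the construction in the proof of Theorem \ref{Picg contracted}. Under Lemma \ref{coh interpre} and $\Pic({\it X})\cong H_\mathrm{et}^{1}(X, \mathcal{O}_{X}^{\times})$, the forgetful map $\eta_{X}$ is exactly the edge homomorphism $H_\mathrm{et}^{1}(G; X; \mathcal{O}_{X}^{\times}) \to H^{0}(G, H_\mathrm{et}^{1}(X, \mathcal{O}_{X}^{\times})) \hookrightarrow H_\mathrm{et}^{1}(X, \mathcal{O}_{X}^{\times})$ of the spectral sequence (\ref{5-term 1}); this map is induced by the exact forgetful functor from $G$-sheaves to sheaves and is natural in the coefficient sheaf. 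In the same way I would identify $L\eta_{X}$ with the forgetful map $H_\mathrm{et}^{1}(G; X; \mathbb{Z}) \to H_\mathrm{et}^{1}(X, \mathbb{Z})$ under the isomorphisms $L{\rm Pic}^{\it G}({\it X})\cong H_\mathrm{et}^{1}(G; X; \mathbb{Z})$ of Theorem \ref{Picg contracted} and $L\Pic({\it X})\cong H_\mathrm{et}^{1}(X, \mathbb{Z})$ of Example \ref{ex1}.

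With these identifications the square splits into three panels along the factorizations of the two splitting maps through $H_\mathrm{et}^{1}(-;\mathbb{Z}) \to H_\mathrm{et}^{1}(-;\pi_{*}\mathcal{O}_{Y}^{\times}) \to {\rm Pic}^{\it G}({\it X[t, t^{-1}]})$ and its non-equivariant form. The middle panel commutes because both of its horizontal arrows are induced by the single $G$-sheaf inclusion $\mathbb{Z} \hookrightarrow \pi_{*}\mathcal{O}_{Y}^{\times}$ coming from (\ref{nat split unit sheaves}), together with naturality of the forgetful edge maps in the coefficient sheaf applied to this inclusion. The right-hand panel commutes because the forgetful functor induces a morphism between the equivariant and the non-equivariant spectral sequences (\ref{spec seq 2}) attached to $\pi$, whence the two edge maps onto the Picard groups of $X[t, t^{-1}]$ are compatible with $\eta$. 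The left-hand panel is the identification of $L\eta_{X}$ recorded above.

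The hard part will be this last identification, since the isomorphism $L{\rm Pic}^{\it G}({\it X})\cong H_\mathrm{et}^{1}(G; X; \mathbb{Z})$ was produced by a diagram chase rather than by a named map. I would settle it by checking that forgetting the $G$-linearization defines a genuine morphism from the full exact diagram used in the proof of Theorem \ref{Picg contracted} to its non-equivariant counterpart in Proposition 7.3 of \cite{Wei}; a morphism of such diagrams carries the connecting isomorphism onto the $L$-term of one to that of the other, which is precisely the compatibility of $L\eta_{X}$ with the forgetful map. Everything that remains is naturality of edge maps and of base change, which is automatic.
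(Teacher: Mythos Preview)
Your proposal is correct and follows essentially the same route as the paper's proof: the paper simply writes down the commutative diagram
\[
\begin{CD}
H_\mathrm{et}^{1}(G; X; \mathbb{Z}) @>>> H_\mathrm{et}^{1}(G; X, \pi_{*}\mathcal{O}_{Y}^{\times}) @>>> H_\mathrm{et}^{1}(G; Y, \mathcal{O}_{Y}^{\times})\\
@VVV @VVV @VVV\\
H_\mathrm{et}^{1}(X, \mathbb{Z}) @>>> H_\mathrm{et}^{1}(X, \pi_{*}\mathcal{O}_{Y}^{\times}) @>>> H_\mathrm{et}^{1}(Y, \mathcal{O}_{Y}^{\times})
\end{CD}
\]
(with the vertical arrows factored through $H^{0}(G,-)$), identifies the rows with the two splitting maps, and declares the result; your three ``panels'' are exactly this diagram dissected. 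The one point you flag as the ``hard part''---that the leftmost vertical arrow really is $L\eta_{X}$ under the identifications of Theorem~\ref{Picg contracted} and Example~\ref{ex1}---is glossed over in the paper as well, so your plan to verify it by mapping the equivariant exact diagram of Theorem~\ref{Picg contracted} to its non-equivariant analogue is a welcome bit of extra care rather than a divergence in strategy.
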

\begin{proof} Write $Y= X[t, t^{-1}]$ and $\pi: Y \to X.$ The result follows from the following commutative diagram \small
$$\begin{CD}
  L{\rm Pic}^{\it G}({\it X})\cong {\it H}_\mathrm{et}^{1}(G; X; \mathbb{Z}) @>>> H_\mathrm{et}^{1}(G; X, \pi_{*}\mathcal{O}_{Y}^{\times}) @>>> H_\mathrm{et}^{1}(G; Y, \mathcal{O}_{Y}^{\times})\cong {\rm Pic}^{\it G}(Y)\\
  @VVV    @VVV            @VVV \\
  H^{0}(G, H_\mathrm{et}^{1}(X, \mathbb{Z})) @. H^{0}(G, H_\mathrm{et}^{1}( X, \pi_{*}\mathcal{O}_{Y}^{\times})) @. H^{0}(G, H_\mathrm{et}^{1}(Y, \mathcal{O}_{Y}^{\times}))\\
  @V{\rm inclusion}VV     @V{\rm inclusion}VV   @V{\rm inclusion}VV  \\
  L\Pic({\it X})\cong {\it H_\mathrm{et}^{1}(X, \mathbb{Z})} @>>> H_\mathrm{et}^{1}(X, \pi_{*}\mathcal{O}_{Y}^{\times}) @>>> H_\mathrm{et}^{1}(Y, \mathcal{O}_{Y}^{\times})\cong \Pic({\it Y}),
 \end{CD}$$\normalsize
where the top and bottom rows are precisely the splitting maps for ${\rm Pic}^{\it G}$ and $\Pic$ respectively (see Theorem \ref{Picg contracted} and Theorem 7.6 of \cite{Wei}), and for columns see (\ref{5-term 1}).
\end{proof}

\begin{theorem}\label{ker is contracted}
 The functor $\ker(\eta)$ is contracted on $Sch^{{\it G}}/k.$ Moreover, the contraction term $L\ker(\eta_{X})$ is isomorphic to $H^{1}(G, H_\mathrm{et}^{0}(X, \mathbb{Z})).$
\end{theorem}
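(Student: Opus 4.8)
The plan is to mimic the structure of the proof of Theorem \ref{Picg contracted}, since $\ker(\eta)$ is assembled from the same cohomological building blocks. The starting point is the identification $\ker(\eta_X) \cong H^1(G, H_\tau^0(X, \mathcal{O}_X^\times))$ established just before Lemma \ref{forgetful map is nat trns}; combined with Lemma \ref{forgetful map is nat trns}, which tells us that $\eta$ is a \emph{morphism} of contracted functors, we know that $\ker(\eta)$ sits naturally inside ${\rm Pic}^{\it G}$. The key structural fact I would invoke is that the kernel of a morphism of contracted functors is again contracted: given the compatible split exact sequences (\ref{main seq}) for ${\rm Pic}^{\it G}$ and $\Pic$ and the natural transformation between them, the snake lemma applied to the induced map of four-term sequences yields a four-term sequence for $\ker(\eta)$, and the naturality of the splittings (exactly the commuting square in the definition of a morphism of contracted functors) transports the splitting of the ${\rm Pic}^{\it G}$-sequence to $\ker(\eta)$.

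Concretely, first I would write down the sequence (\ref{main seq}) for both ${\rm Pic}^{\it G}$ and $\Pic$ as rows of a commutative diagram with vertical maps given by the components of $\eta$, then take kernels column by column. Since each of $F = {\rm Pic}^{\it G}$ and $F' = \Pic$ is acyclic, and the vertical maps $\eta_{X[t]}$, $\eta_{X[t^{-1}]}$, $\eta_{X[t,t^{-1}]}$ are componentwise compatible with the horizontal $(+,-)$ and $\mathrm{add}$ maps, the kernels fit into
\begin{equation*}
 0 \to \ker(\eta_X) \to \ker(\eta_{X[t]}) \oplus \ker(\eta_{X[t^{-1}]}) \to \ker(\eta_{X[t,t^{-1}]}) \to L\ker(\eta_X) \to 0.
\end{equation*}
To identify the contraction term, I would use that $U = H^0_\tau(-, \mathcal{O}_{-}^\times)$ is a contracted functor with contraction $LU(X) \cong H_\mathrm{et}^0(X, \mathbb{Z})$ (Example \ref{ex1}), and that $H^1(G, -)$ is exact enough on the relevant pieces. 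Since $\ker(\eta_{-}) \cong H^1(G, U(-))$ functorially, applying $H^1(G, -)$ to the split decomposition $U(X[t,t^{-1}]) \cong U(X) \oplus NU(X) \oplus NU(X) \oplus H_\mathrm{et}^0(X, \mathbb{Z})$ should directly produce the contraction $L\ker(\eta_X) \cong H^1(G, H_\mathrm{et}^0(X, \mathbb{Z}))$, using that the $N$ and $L$ operations commute with applying $H^1(G,-)$ termwise.

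The splitting is where I would be most careful. Rather than re-deriving it, I would lift the splitting of ${\rm Pic}^{\it G}$ from Theorem \ref{Picg contracted}: the diagram in Lemma \ref{forgetful map is nat trns} shows that the ${\rm Pic}^{\it G}$-splitting lands in the kernel of the forgetful map on $L$-terms in a way compatible with $\eta$, so restricting that splitting map $L{\rm Pic}^{\it G}(X) \to {\rm Pic}^{\it G}(X[t,t^{-1}])$ to the sub-$H^1(G,-)$ coming from $\ker(\eta)$ gives the desired natural section of $\ker(\eta_{X[t,t^{-1}]}) \twoheadrightarrow L\ker(\eta_X)$. The main obstacle I anticipate is verifying that this restricted map actually has image inside $\ker(\eta_{X[t,t^{-1}]})$ and is a genuine section; this amounts to a careful diagram chase tracing the term $H^1(G, H^0_\mathrm{et}(X,\mathbb{Z}))$ through the filtration induced by (\ref{nat split unit sheaves}), and checking that the Hochschild--Serre edge maps $H^1(G, H^0_\tau(-,\mathcal{O}^\times)) \to H^1_\tau(G; -; \mathcal{O}^\times)$ are compatible with the $N$- and $L$-splittings. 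Everything else is formal functoriality.
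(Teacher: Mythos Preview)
Your proposal is correct and follows the same underlying strategy as the paper: use Lemma~\ref{forgetful map is nat trns} to know $\eta$ is a morphism of contracted functors, and then invoke the general principle that the kernel of such a morphism is again contracted with $L\ker(\eta_X)=\ker(L\eta_X)$. The paper's proof is far terser than yours because it does not re-derive this principle via a diagram chase; it simply cites Lemma~2.2 of \cite{Sw}, which packages exactly the splitting-transport argument you sketch. For the identification of the contraction term, you compute directly by applying the additive functor $H^1(G,-)$ to the $G$-equivariant decomposition of $U(X[t,t^{-1}])$ from Example~\ref{ex1}, whereas the paper instead reads it off from the five-term sequence~(\ref{5-term 1}) with $\mathcal{F}=\mathbb{Z}$, using that $L{\rm Pic}^{G}(X)\cong H^1_{\mathrm{et}}(G;X;\mathbb{Z})$ and $L\Pic(X)\cong H^1_{\mathrm{et}}(X,\mathbb{Z})$; both arguments are one step and yield the same answer.
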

\begin{proof}
 By Lemma \ref{forgetful map is nat trns}, $\eta$ is a morphism of contracted functors on $Sch^{{\it G}}/k.$ Then $\ker(\eta)$ is a contracted functor with contraction $L\ker(\eta)(X)=L\ker(\eta_{X})$ for any $X\in Sch^{{\it G}}/k$ (see Lemma 2.2 of \cite{Sw}). The second statement follows from (\ref{5-term 1}).
\end{proof}

We conclude by discussing a vanishing result of $L{\rm Pic}^{\it G}({\it X})$ for a hensel local ring.

\begin{theorem}\label{vanishing of LPic^G}
 Let $X$ be a connected $G$-scheme.  If $L\Pic({\it X})=0$ then $L{\rm Pic}^{\it G}({\it X})=0.$
  \end{theorem}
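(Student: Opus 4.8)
The plan is to translate the entire statement into $G$-cohomology via the identification $L{\rm Pic}^{\it G}({\it X})\cong {\it H}_\mathrm{et}^{1}(G; X; \mathbb{Z})$ supplied by Theorem \ref{Picg contracted}, and then to feed the hypothesis into the five-term exact sequence (\ref{5-term 1}). First I would restate the hypothesis cohomologically: by Example \ref{ex1} we have $L\Pic({\it X})\cong {\it H}_\mathrm{et}^{1}({\it X}, \mathbb{Z})$, so the assumption $L\Pic({\it X})=0$ is precisely the vanishing $H_\mathrm{et}^{1}(X, \mathbb{Z})=0$.

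Next I would invoke the sequence (\ref{5-term 1}) in the \'etale topology, applied to the constant $G$-sheaf $\mathbb{Z}$:
$$0 \to H^{1}(G, H_\mathrm{et}^{0}(X, \mathbb{Z})) \to H_\mathrm{et}^{1}(G; X; \mathbb{Z}) \to H^{0}(G, H_\mathrm{et}^{1}(X, \mathbb{Z})) \to \cdots.$$
Since $H_\mathrm{et}^{1}(X, \mathbb{Z})=0$, the third term $H^{0}(G, H_\mathrm{et}^{1}(X, \mathbb{Z}))$ vanishes, and the edge map becomes an isomorphism $H^{1}(G, H_\mathrm{et}^{0}(X, \mathbb{Z}))\xrightarrow{\cong} H_\mathrm{et}^{1}(G; X; \mathbb{Z})$.

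It then remains to compute the left-hand group. Because $X$ is connected, the global sections of the constant \'etale sheaf $\mathbb{Z}$ form the group $H_\mathrm{et}^{0}(X, \mathbb{Z})\cong \mathbb{Z}$, and any $G$-action by scheme automorphisms fixes the canonical constant section $1$, so $G$ acts trivially on $\mathbb{Z}$. Consequently $H^{1}(G, \mathbb{Z})=\mathrm{Hom}(G, \mathbb{Z})=0$, since $G$ is finite while $\mathbb{Z}$ is torsion-free. Combining this with the isomorphism above and with Theorem \ref{Picg contracted} yields $L{\rm Pic}^{\it G}({\it X})\cong H_\mathrm{et}^{1}(G; X; \mathbb{Z})\cong H^{1}(G, \mathbb{Z})=0$, which is the claim.

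The argument is essentially a bookkeeping exercise once the cohomological dictionary is in place, so there is no serious obstacle; the only point demanding a word of care is the claim that $G$ acts trivially on $H_\mathrm{et}^{0}(X, \mathbb{Z})$, which is exactly where the connectedness hypothesis on $X$ is used (for a disconnected $X$ the action could permute components and $H^{1}(G, H_\mathrm{et}^{0}(X, \mathbb{Z}))$ need not vanish). I would therefore highlight that step explicitly rather than leaving it implicit.
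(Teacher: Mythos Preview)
Your argument is correct and is essentially the paper's own proof: both feed the constant $G$-sheaf $\mathbb{Z}$ into the five-term sequence (\ref{5-term 1}), identify the outer terms via Theorem~\ref{Picg contracted} and Example~\ref{ex1}, and then use connectedness of $X$ together with finiteness of $G$ to conclude $H^{1}(G,\mathbb{Z})=0$. The only cosmetic difference is that the paper writes the sequence directly with $L{\rm Pic}^{G}(X)$ and $(L\Pic(X))^{G}$ in the second and third slots, whereas you first pass to $H_{\mathrm{et}}^{1}(G;X;\mathbb{Z})$ and $H^{0}(G,H_{\mathrm{et}}^{1}(X,\mathbb{Z}))$ and translate afterwards.
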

  
 \begin{proof}
The  exact sequence (\ref{5-term 1}) for the constant $G$-sheaf $\mathbb{Z}$ (using Example \ref{ex1} and Theorem \ref{Picg contracted}) gives that
$$ 0 \to H^{1}(G, H_\mathrm{et}^{0}(X, \mathbb{Z})) \to L{\rm Pic}^{\it G}({\it X}) \to ({\it L}\Pic({\it X}))^{{\it G}} \to \dots .$$ Since $X$ is connected,  $H_\mathrm{et}^{0}(X, \mathbb{Z})\cong H_{zar}^{0}(X, \mathbb{Z}) \cong \mathbb{Z}.$ Thus, $H^{1}(G, \mathbb{Z})=0$ because $G$ is a finite group. 
Hence the assertion. \end{proof}

\begin{corollary}\label{vanishing for hensel}
 Let $X={\rm Spec}(A)$ be a $G$-scheme. If $A$ is a hensel local ring then $L{\rm Pic}^{\it G}({\it X})=0.$ 
\end{corollary}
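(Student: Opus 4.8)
The plan is to derive Corollary~\ref{vanishing for hensel} as an immediate application of Theorem~\ref{vanishing of LPic^G}. The strategy is to verify that a scheme of the form $X = \mathrm{Spec}(A)$ with $A$ hensel local satisfies the two hypotheses of that theorem, namely that $X$ is connected and that $L\Pic({\it X}) = 0$; once both are in place, the conclusion $L{\rm Pic}^{\it G}({\it X}) = 0$ follows verbatim.

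First I would observe that connectedness is automatic: a local ring has no nontrivial idempotents, so $\mathrm{Spec}(A)$ is connected (indeed irreducible is unnecessary, connectedness suffices). Next, the key input is the vanishing $L\Pic({\it X}) = 0$ for hensel local $A$. This is precisely the non-equivariant fact recorded inside the proof of Theorem~\ref{Picg contracted}, drawn from Theorem~2.5 of \cite{Wei}: for a hensel local ring $R$ one has $L\Pic({\it R}) = 0$. Since $A$ is hensel local, this applies directly to $X = \mathrm{Spec}(A)$.

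With these two facts established, Theorem~\ref{vanishing of LPic^G} applies: its hypotheses are exactly that $X$ be connected and $L\Pic({\it X}) = 0$. Its proof feeds these into the five-term exact sequence (\ref{5-term 1}) for the constant $G$-sheaf $\mathbb{Z}$, where connectedness forces $H_\mathrm{et}^{0}(X, \mathbb{Z}) \cong \mathbb{Z}$ with trivial $G$-action, hence $H^{1}(G, \mathbb{Z}) = 0$ because $G$ is finite, while $L\Pic({\it X}) = 0$ kills the next term $(L\Pic({\it X}))^{{\it G}}$; squeezing $L{\rm Pic}^{\it G}({\it X})$ between two vanishing groups gives $L{\rm Pic}^{\it G}({\it X}) = 0$.

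There is essentially no obstacle here, since the work has been done in Theorem~\ref{vanishing of LPic^G}; the only point requiring a moment of care is citing the correct source for $L\Pic({\it R}) = 0$ over a hensel local ring (Theorem~2.5 of \cite{Wei}), which is already invoked earlier in the paper, and confirming that a hensel local scheme is connected. The proof is therefore a short two-line deduction: connectedness of $\mathrm{Spec}(A)$, the cited vanishing of $L\Pic$, and then an appeal to Theorem~\ref{vanishing of LPic^G}.
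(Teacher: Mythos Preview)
Your proposal is correct and matches the paper's own proof essentially verbatim: cite Theorem~2.5 of \cite{Wei} for $L\Pic({\it A})=0$ over a hensel local ring and then invoke Theorem~\ref{vanishing of LPic^G}. The only addition you make is spelling out that $\mathrm{Spec}(A)$ is connected (needed for Theorem~\ref{vanishing of LPic^G}), which the paper leaves implicit since $A$ is local.
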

\begin{proof}
 For a hensel local ring $A,$ $L\Pic({\it A})=0$ (see Theorem 2.5 of \cite{Wei}). Hence the result by Theorem \ref{vanishing of LPic^G}. 
\end{proof}


\begin{thebibliography}{AAA}
\bibitem{Bass} H. Bass, {\it Algebraic K-theory}, Benjamin, New York, 1968
 \bibitem{Coq} T. Coquand, {\it On Seminormality}, J. Algebra {\bf 305} (2006), 577-584.
\bibitem{FKM} J. Fogarty, F. Kirwan, D. Mumford,{\it  Geometric invariant theory},  Ergebnisse der Mathematik und iher Grenzgebiete, 34, Third edition, Springer-Verlag, Berlin, (1994).
\bibitem{FU} Lei Fu, {\it \'Etale Cohomology Theory}, Vol 13, Nankai Tracts in Mathematics, World Scientific Publishing Co. Pte. Ltd. (2011).

\bibitem{Greco-trv} S. Greco and C. Traverso, {\it On seminormal Schemes}, Compositio Mathematica, {\bf 40} (1980), 325-365.
 
\bibitem{HVO} J. Heller, M. Voineagu, P. A. Ostvaer, {\it  Equivariant cycles and cancellation for motivic
cohomology}, Doc. Math. {\bf 20}, (2015), 269-332.
 
\bibitem{SGA} {\it Revêtements étales et groupe fondamental} (SGA 1), Documents
Mathématiques (Paris) [Mathematical Documents (Paris)], 3, Société
Mathématique de France, Paris, 2003, Séminaire de géométrie
algébrique du Bois Marie 1960–61. [Algebraic Geometry Seminar of
Bois Marie 1960-61], Directed by A. Grothendieck, With two papers
by M. Raynaud, Updated and annotated reprint of the 1971 original [Lecture Notes in Math., 224, Springer, Berlin.
 \bibitem{Sw}V. Sadhu and C. Weibel, {\it Relative Cartier divisors and Laurent polynomial Extensions,} Math. Z {\bf 285} (2017) 353-366.

\bibitem {swan} R. G Swan, On Seminormality, J. Algebra {\bf 67} (1980), 210-229.

\bibitem{Wei} C. Weibel, {\it Pic is a contracted functor}, Invent math. {\bf 103} (1991), 351-377.
\bibitem{Weihom} C. Weibel, An introduction to homological algebra. Cambridge Studies in Advanced Mathematics, {\bf 38}, Cambridge University Press, Cambridge, 1994. 
\bibitem{wei 1} C. Weibel, The K-Book: An Introduction to Algebraic K Theory. Graduate Studies in Mathematics, {\bf 145}, American Mathematical Society, Providence, RI, 2013.

\end{thebibliography}
\end{document}